\documentclass[10pt]{article}
\usepackage{amsmath,amscd}
\usepackage{amssymb,latexsym,amsthm}
\usepackage{color,palatino}
\usepackage[spanish,english]{babel}
\usepackage{multirow}
\usepackage[pdftex]{hyperref}
\usepackage[latin1]{inputenc}
\usepackage{lscape}
\usepackage{enumerate}
\usepackage{fancyhdr}
\usepackage{pb-diagram}
\usepackage{amsfonts}

\numberwithin{equation}{section}
\newtheorem{theorem}{Theorem}[section]

\newtheorem{definition}[theorem]{Definition}
\newtheorem{proposition}[theorem]{Proposition}
\newtheorem{lemma}[theorem]{Lemma}

\newtheorem{corollary}[theorem]{Corollary}

\theoremstyle{definition}

\newtheorem{remark}[theorem]{Remark}

\newcommand\h{\mathfrak h}

\newcommand\U{\mathcal U}
\newcommand\B{\mathcal B}
\newcommand\J{\mathcal J}
\newcommand\M{\mathcal M}
\newcommand\PP{\mathcal P}
\newcommand\LL{\mathcal L}

\newcommand\vu{\textbf{\emph{u}}}

\newcommand\K{\Bbbk}

\hoffset-0.2in
\voffset-1.3cm \setlength{\oddsidemargin}{9mm}
\setlength{\textheight}{21cm}\setlength{\textwidth}{16cm}
\makeatletter
\def\@roman#1{\romannumeral #1}
\makeatother

\title{\textbf{Stable rank of down-up algebras}}
\author{Claudia Gallego and Andrea Solotar\thanks{This work has been supported by the projects UBACYT 20020130 100533BA, PIP-CONICET
11220150100483CO, and PICT 2015-0366. The first named author is a CONICET postdoctoral fellow. The second named author is a research member of
\-CO\-NI\-CET (Argentina) and a Senior Associate of ICTP Associate Scheme.}}
\date{}
\begin{document}
\makeatletter
\def\@roman#1{\romannumeral #1}
\makeatother
\maketitle
\begin{abstract}
\noindent
We investigate the behavior of finitely generated projective modules over a down-up algebra. Specifically, we show that every noetherian down-up algebra $A(\alpha,\beta,\gamma)$ has a non-free, stably free right ideal. Further, we compute the stable rank of these algebras using Stafford's Stable Range Theorem and Kmax dimension.
\bigskip

\noindent
\textit{Keywords:} Down-up algebras, stably free modules, projective modules, stable rank, Krull dimension, Kmax dimension.

\bigskip

\noindent 2010 \textit{Mathematics Subject Classification.} Primary: 16D40, 19A13, 19B10.  Secondary: 16P60.
\end{abstract}
%%%%%%%%%%%%%%%%%%%%%%%%%%%%%%%%%%%%%%%%%%%%%%%%%
\section{Introduction}
\noindent
The study of finitely projective modules over an arbitrary ring is a classical task in homological algebra. Investigating whether these modules are free, or
at least stably free,  has also great interest in  geometry, topology and $K$-theory. One of the most well known results in this context is the Quillen-Suslin theorem about Serre's problem for the commutative polynomial ring $\K[x_1,\ldots,x_n]$, where $\K$ is a field. In this particular situation, Quillen and Suslin proved independently that the finitely generated projective modules are free, see \cite{Lam} for a detailed and very clear exposition about this subject. However, for noncommutative rings of polynomial type it is easy to present examples where the Quillen-Suslin Theorem fails. For instance, if $T$ is a division ring and $S := T[x,y]$, there is an $S$-module $M$ such that $M\bigoplus S\cong S^2$, but $M$ is not free, see \cite{Stafford5}. Moreover, Stafford developed conditions in
\cite[Theorem 1.2]{Stafford5} under which the skew polynomial ring $S=R[x;\sigma,\delta]$, with $R$ a noetherian domain, $\sigma$ an automorphism of $R$ and $\delta$ a $\sigma$-derivation, has a non-trivial stably free right ideal. These ideas have been used in \cite{Antoniou} in order to obtain non-trivial stably free modules over the enveloping algebras of the RIT (relativistic internal time) Lie algebras. Using similar methods, Iyudu and Wisbauer  gave a sufficient condition in \cite{Iyudu} for the existence of projective non-free modules over the class of crossed products of noetherian domains with universal enveloping algebras of Lie algebras. In the current paper, we will show that there exist non-free projective modules over down-up algebras too. This fact will allow us to obtain bounds of the stable rank of these algebras. \\
\\
Down-up algebras have been introduced by Benkart and Roby in \cite{Benkart1} motivated by the study of posets. Given a field $\K$ and constants $\alpha$, $\beta$, $\gamma$ in $\K$, the \emph{down-up algebra} $A=A(\alpha,\beta,\gamma)$ is the associative algebra generated over $\K$ by $U$ and $D$, subject to the defining relations:
\begin{align*}
DU^2=&\alpha UDU+\beta U^2D+\gamma U\\
D^2U=&\alpha DUD+\beta UD^2+\gamma D.
\end{align*}
As known examples of down-up algebras, we can mention $A(2,-1,0)$ that turns out to be isomorphic to the enveloping algebra of the Heisenberg Lie algebra of dimension 3; for the case  where $\gamma\neq 0$, the algebra $A(2,-1,\gamma)$ is isomorphic to the enveloping algebra of $sl_2(\K)$. For another interesting example, consider the quantized enveloping algebra $U_q(sl_3(\K))$ with generators $E_i$, $F_i$, $K^{\pm 1}$, $i=1$, $2$ and a non-zero scalar $q$ in $\K$; the subalgebra of $U_q(sl_3(\K))$ generated by $E_1$, $E_2$ is the down-up algebra $A([2]_{q},-1,0)$, where $[n]_{q}=\frac{q^{n}-q^{-n}}{q-q^{-1}}$. For $\gamma\neq 0$, the down-up algebra $A(0,1,\gamma)$ is isomorphic to the  enveloping algebra of the Lie superalgebra osp(1,2).\\
\\
Kirkman, Musson and Passman proved in \cite{Kirkman}  that $A(\alpha,\beta,\gamma)$ is a noetherian algebra if and only if,  the parameter $\beta$ is non-zero; the latter is equivalent to saying that $A(\alpha,\beta,\gamma)$ is a domain. Furthermore, for down-up algebras, the Krull, Gelfand-Kirillov, and global dimensions have already been computed, see \cite{Bavula4}, \cite{Benkart1} and \cite{Kirkman}. Additionally, their representation theory, Hochschild homology and cohomology, as well as several homological and ring theoretical properties have also been studied  (e.g., \cite{carvalho}, \cite{carvalho2}, \cite{solotar}, \cite{Jordan}, \cite{zhao}).\\
\\
In \cite{Benkart1} the task of investigating indecomposable and projective modules for down-up algebras was proposed. We give a partial answer to this subject proving in Theorem \ref{T0} that all finitely generated projective modules over a noetherian down-up algebra are stably free. Moreover, we show that the class of noetherian down-up algebras does not satisfy a noncommutative version of Quillen-Suslin Theorem in the sense that there exist non-trivial stably free modules over these algebras, see Corollary \ref{C1} and Proposition \ref{P1}. In view of the above, we obtain a lower bound of the stable rank of a down-up algebra and, using the Stafford's Stable Range Theorem, we achieve in Theorem \ref{P2} upper bounds of this value. Finally, under certain conditions, the exact value of stable rank is obtained in Theorem \ref{P3}.\\
\\
The article is organized as follows: in Section \ref{s2} we prove that every finitely generated projective module over a noetherian down-up algebra is stably free.\\
Section \ref{s3} is devoted to showing that the algebra $A=A(\alpha,\beta,\gamma)$, with $\beta\neq 0$, always has a non-trivial stably free right ideal. For this task, we split the problem in two cases: $\gamma\neq 0$ and $\gamma=0$ and we use some techniques from \cite{Stafford5} to achieve our goal.\\
In Section \ref{s4}, bounds of the stable rank of a down-up algebra are established. Under some conditions over the roots of the polynomial $t^2-\alpha t-\beta$, such bounds are improved. The main tool at this point is the Kmax dimension of an arbitrary ring.

\section{Stability of projective modules}\label{s2}
\noindent
A ring $S$ is called a \emph{PSF} ring if every finitely generated projective $S$-module is stably free. In this section we will show that, for $\beta\neq 0$, the algebra $A=A(\alpha,\beta,\gamma)$ is a \emph{PSF} ring. It is important to note that, as $\beta$ is non-zero, $A$ is a right (left) noetherian ring \cite[Corollary 2.2]{Kirkman} and, therefore, the rank of free $A$-modules and the rank of  stably free $A$-modules are well defined.
\begin{theorem}\label{T0}
Let $A=A(\alpha,\beta,\gamma)$ be a down-up algebra. If $\beta\neq 0$, then $A$ is a \emph{PSF} ring.
\end{theorem}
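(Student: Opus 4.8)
The strategy is to exhibit the down-up algebra $A = A(\alpha,\beta,\gamma)$ with $\beta \neq 0$ as an iterated Ore extension of a suitable base ring for which the PSF property is already known, and then to transfer PSF along Ore extensions. Recall from Kirkman–Musson–Passman that when $\beta \neq 0$ the algebra $A$ is noetherian and can be realized as a skew polynomial ring: setting $W = DU - \lambda_1 UD$ (where $\lambda_1, \lambda_2$ are the roots of $t^2 - \alpha t - \beta$), one has $A \cong \K[W][U;\sigma_1][D;\sigma_2,\delta_2]$ for appropriate automorphisms and a skew derivation, so $A$ is a noetherian domain of Krull dimension at most $3$. Thus $A$ is built from $R_0 = \K[W]$ (a PID, hence a PSF ring — in fact every f.g. projective is free) by two successive Ore extensions.

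The key step is therefore a transfer lemma: \emph{if $R$ is a right noetherian PSF ring, then so is the Ore extension $R[x;\sigma,\delta]$.} The plan is to invoke (or reprove, following the circle of ideas around \cite{Stafford5}) the fact that for $S = R[x;\sigma,\delta]$ with $R$ right noetherian, every f.g. projective $S$-module $P$ has a well-defined rank and satisfies $P \oplus S^n \cong S^{n+1}$ once $n$ is large — more precisely one wants to reduce the number of generators. The standard route is: (i) a f.g. projective module over a noetherian ring of finite Krull dimension $d$ has a free summand as soon as its rank exceeds $d$ (Bass's stable range / cancellation-type argument), so it suffices to control projectives of small rank; (ii) for rank-one projectives, i.e. projective right ideals of $S$, use that $S = R[x;\sigma,\delta]$ is a domain and apply a Stafford-type argument showing such an ideal is stably free — indeed the relevant statement is essentially dual to \cite[Theorem 1.2]{Stafford5}, which manufactures stably free ideals, combined with the observation that in a domain a projective right ideal $I$ satisfies $I \oplus S \cong S \oplus S$ (since $I$ has rank one and one gets $I \oplus I^{*} $-type relations, or more directly $\mathrm{pd}$ and the structure of f.g. torsion-free modules over a hereditary-after-localization ring). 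The cleanest packaging is: reduce a general f.g. projective to a direct sum of a free module and a rank-one projective ideal, then kill the ideal stably.

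Concretely the steps are: (1) cite \cite[Corollary 2.2]{Kirkman} for the Ore-extension presentation $A \cong \K[W][U;\sigma_1][D;\sigma_2,\delta_2]$ and for noetherianity; (2) note $\K[W]$ is a PSF ring trivially; (3) prove the transfer lemma "right noetherian PSF $\Rightarrow$ $R[x;\sigma,\delta]$ is PSF", which is where the real work lies; (4) apply the lemma twice to conclude $A$ is PSF. For step (3) the substeps are: reduce to rank-one projectives using the finite-Krull-dimension cancellation theorem, then show a projective right ideal of the (noetherian) domain $R[x;\sigma,\delta]$ is stably free using the technology of \cite{Stafford5}.

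The main obstacle is step (3), the transfer lemma — specifically showing that a projective right ideal of an Ore extension $R[x;\sigma,\delta]$ over a noetherian PSF domain is \emph{stably} free rather than merely projective. One must handle the fact that $R[x;\sigma,\delta]$ need not be hereditary, so rank-one projectives need not be free; the payoff of "stably free" comes from a careful generator-reduction argument in the style of Stafford, using the domain structure and Krull dimension to prove that a projective ideal $I$ satisfies $I \oplus S \cong S^{2}$. Everything else — the Ore presentation, the base case, and the final two-fold application — is routine once the lemma is in hand.
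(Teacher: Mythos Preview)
Your overall architecture --- realize $A$ as an iterated Ore extension and transfer the PSF property up from $\K[W]$ --- is reasonable, but step (3), the transfer lemma, has a genuine gap in the direction you propose. The claim you need in part (ii), that a projective right ideal $I$ of a noetherian domain $S$ automatically satisfies $I \oplus S \cong S^{2}$, is simply false: take $S$ to be a Dedekind domain with nontrivial ideal class group, and any non-principal ideal gives a rank-one projective that is not stably free. Stafford's \cite[Theorem~1.2]{Stafford5} goes in the opposite direction --- it \emph{constructs} a non-free stably free ideal under extra hypotheses --- and there is no ``dual'' of it that would show an arbitrary projective ideal is stably free. The vague appeals to $I \oplus I^{*}$-type relations or ``hereditary-after-localization'' structure do not produce such a statement either. (There is also a smaller issue in part (i): Bass-type cancellation only reduces a projective to rank $<\mathrm{sr}(S)$, not to rank one, so even granting (ii) you would still have to deal with projectives of rank $2$ and $3$.)

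What is actually needed for the transfer is \emph{regularity}, not PSF alone: if $R$ is right noetherian and right regular, then the natural map $K_{0}(R)\to K_{0}(R[x;\sigma,\delta])$ is an isomorphism, and this is what forces projectives over the Ore extension to be stably free once they are over $R$. The paper uses exactly this idea, packaged via a filtration rather than an explicit Ore tower: one filters $A$ so that $\mathrm{Gr}(A)\cong A(\alpha,\beta,0)$, observes that $\mathrm{Gr}(A)$ is noetherian and right regular (since $\mathrm{gldim}(A)=3$), notes that $\mathrm{Gr}(A)$ is free over $V_{0}=\K$, and then invokes \cite[Theorem~12.3.2]{McConnell}. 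Your plan can be repaired by inserting the regularity hypothesis and citing the $K_{0}$ result for Ore extensions (e.g.\ from Chapter~12 of \cite{McConnell}) in place of the Stafford-style argument --- at which point it becomes essentially the same proof as the paper's, just phrased in terms of the Ore presentation rather than the filtration.
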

\begin{proof}
In \cite[Section 3.1]{Kirkman} it is proved that the collection $\{V_n\}_{n\geq 0}$ given by $V_0:=\Bbbk$, $V_1:=\Bbbk+\K u+\K d$ and $V_n:= (V_1)^n$, for $n\geq 2$, is a filtration of $A$, and that  $Gr(A)$, the associated graded ring, is isomorphic to the down-up algebra $A(\alpha,\beta,0)$. Hence, $Gr(A)$ is a right (left) noetherian ring. It is also known that if $A$ is noetherian, then gldim($A$) $=3$ \cite[Theorem 4.1]{Kirkman}; thus  $Gr(A)$ is a right regular ring. Since $Gr(A)$ is a free $V_0$-module, it follows from \cite[Theorem 12.3.2]{McConnell} that $A$ is a \emph{PSF} ring.
\end{proof}
\begin{remark}
Given $R=\bigoplus_{i\geq 0}R_i$ a graded ring, we know that if $P$ is a finitely generated graded projective $R$-module, then $P$ is extended from $R_0$; more precisely, there is a graded $R$-module isomorphism $R\otimes_{R_0} P_0\cong P$, where $P_0$ is a graded projective $R_0$-module, see \cite[Theorem II. 4.6]{Lam}. Thus, if $A=A(\alpha,\beta,\gamma)$ is a noetherian down-up algebra with $\gamma=0$, every finitely generated graded projective $A$-module $P$ is extended  from $\K$. Hence, $P$ turns out to be a free $A$-module.
\end{remark}
\section{Non-trivial stably free ideals}\label{s3}
\noindent
It is well known that there exist stably free modules which are non-free over $\U(sl_2(\K))$ and $\U(\h)$, where $\h$ denotes the Heisenberg Lie algebra of dimension 3 \cite{Stafford5}. These algebras are examples of down-up algebras, so this raises the question whether every down-up algebra has a non-trivial stably free module or not. The goal of this section is to exhibit examples of such modules. To achieve such objective, we will distinguish two cases: $\gamma\neq 0$  and $\gamma=0$. In the following, we assume that $\beta\neq 0$, and that $\K$ is a field of characteristic zero that contains both roots of the polynomial $t^2-\alpha t-\beta$.
\vspace{-2mm}
\subsection{Case $\gamma\neq 0$}
\noindent
For this case, we will proceed as in \cite{Antoniou}, \cite{Iyudu} and \cite{Stafford5}: first, we consider a subalgebra $\widetilde{A}$ of $A$ for which there exists a right (left) non-trivial stably free ideal $K$. Afterwards, we extend such ideal $K$ to the whole algebra $A$ using results from \cite{Stafford5}.\\
Let $\lambda$ and $\mu$ be the roots of $t^2-\alpha t-\beta$, so that $\alpha=\lambda+\mu$ and $\beta=-\lambda\mu$. Since $\beta$ is non-zero, it follows that $\lambda$ and $\mu$ are both non-zero. For $\gamma\neq 0$, there is an isomorphism $A(\alpha,\beta,\gamma)\cong A(\alpha,\beta,1)$, see \cite[Lemma 4.1 (ii)]{carvalho}, so we assume $\gamma=1$ without loss of generality. Under these conditions, the multiplication rules in $A$ are given by:
\begin{align*}
[D,[D,U]_{\lambda}]_{\mu}=D \quad\quad\quad\quad [[D,U]_{\lambda},U]_{\mu}=U,
\end{align*}
where $[a,b]_{\eta}$ denotes the expression $ab-\eta ba$. Let $\omega:=[d,u]_{\lambda}=du-\lambda ud$ and consider the algebra $\widetilde{A}:=\K[u][\omega;\sigma,\delta]$ with $\sigma$ an automorphism of $\K[u]$ such that $\sigma(u):=\mu^{-1}u$, and $\delta$ the $\sigma$-derivation defined by $\delta(u):=-\mu^{-1}u$. Let us see that $\widetilde{A}$ is a subalgebra of $A$: indeed, let $\phi:\widetilde{A}\to A$ be determined by $u\mapsto U$ and $\omega\mapsto [D,U]_{\lambda}$; then:
\begin{align*}
U[D,U]_{\lambda}&=U(DU-\lambda UD)=UDU-\lambda U^2D\\
&=\mu^{-1}(\mu UDU-\lambda \mu U^2D-U)=\mu^{-1}(DU^{2}-\lambda UDU-U)\\
&=\mu^{-1}[D,U]_{\lambda}U-\mu^{-1}U,
\end{align*}
and therefore, $\phi$ turns out to be an algebra homomorphism. The set $\B_1:=\{u^{i}\omega^j\mid i,j\in \mathbb{N}\}$ is a $\K$-basis for $\widetilde{A}$ and $\phi(\B_1)=\{U^j(DU-\lambda UD)^j\mid i,j\in \mathbb{N}\}$. Since $\mathcal{B}:=\{U^i(DU+aUD+b)^jD^k\mid i,j,k\in\mathbb{N}\}$ is a $\K$-basis of $A$ for any $a,b\in \K$ \cite[Lemma 2.2]{zhao}, then $\phi(\B_1)$ is linearly independent in $A$ and $\widetilde{A}$ is a subalgebra of $A$.\\
\\
We will strongly  use the following remarkable result from \cite{Stafford5}:
\begin{lemma}\cite[Corollary 1.6]{Stafford5}\label{L0}
Let $R$ be a noetherian domain and let $S=R[x;\sigma,\delta]$ be an Ore extension. Suppose that there exists a non-unit $r\in R$ such that $\sum_{i\geq 0}\delta^{i}(r)R=R$. Then $K=rS\cap xS$ is a non-trivial, stably free right ideal of $S$.
\end{lemma}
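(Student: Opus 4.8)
The plan is to realize $K$ as the kernel of a split surjection $S^{2}\twoheadrightarrow S$ and then to prove separately that this kernel is not free.

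\emph{Stable freeness.} I would start from the tautological exact sequence of right $S$-modules
$$0\longrightarrow rS\cap xS\longrightarrow rS\oplus xS\xrightarrow{\,(a,b)\mapsto a-b\,} rS+xS\longrightarrow 0 ,$$
whose first map is $w\mapsto(w,w)$. Since $S$ is a domain and $r,x\neq 0$, left multiplication gives isomorphisms of right $S$-modules $S\xrightarrow{\ \sim\ }rS$ and $S\xrightarrow{\ \sim\ }xS$, so the middle term is free of rank $2$. Everything then hinges on showing $rS+xS=S$: granted this, the sequence reads $0\to K\to S^{2}\to S\to 0$, it splits because $S$ is free, and we obtain $K\oplus S\cong S^{2}$, so $K$ is stably free of rank $1$.

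\emph{Proof that $rS+xS=S$.} This is where the hypothesis enters. Pass to the quotient right $S$-module $\overline S:=S/xS$. Writing each element of $S=R[x;\sigma,\delta]$ in normal form $\sum_{i\ge 0}x^{i}a_{i}$, one has $xS=\bigoplus_{i\ge 1}x^{i}R$, so $\sum_{i}x^{i}a_{i}\mapsto a_{0}$ identifies $\overline S$ with $R$ as a $\K$-vector space; under this identification $R$ acts on $\overline S$ by right multiplication while $x$ acts by the $\sigma$-derivation $\delta$, since $\overline{a\cdot x}=\overline{x\sigma(a)+\delta(a)}=\delta(a)$. Hence the cyclic submodule of $\overline S$ generated by the image of $r$ is identified with the smallest $\K$-subspace of $R$ that contains $r$ and is stable under right multiplication by $R$ and under $\delta$; using the Leibniz rule $\delta(ab)=\delta(a)\sigma(b)+a\delta(b)$ and $\sigma(R)=R$, this subspace is exactly $\sum_{i\ge 0}\delta^{i}(r)R$, which equals $R$ by assumption. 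Therefore the image of $r$ generates $\overline S=S/xS$, i.e. $rS+xS=S$.

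\emph{Non-triviality.} Finally I would show $K$ is not free. As $S$ is a noetherian, hence right Ore, domain, a nonzero right ideal is free if and only if it is principal, and $K=rS\cap xS\subseteq xS\subsetneq S$ gives $K\neq S$; so it suffices to exclude $K=fS$ with $f$ a non-unit. From $K\subseteq rS$ and $K\subseteq xS$ one then has $f=rf'=xf''$ for some $f',f''\in S$, and, since $rS+xS=S$, the Chinese Remainder Theorem yields
$$S/fS\;=\;S/K\;\cong\;S/rS\,\oplus\,S/xS\;\cong\;S/rS\,\oplus\,R .$$
The task is to contradict this: no principal right ideal generated by a non-unit should have a quotient of this shape. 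A concrete handle is the description $K\cong\{(s,t)\in S^{2}:rs=xt\}$ together with the splitting $S^{2}\cong K\oplus S$ — freeness of $K$ would render invertible in $M_{2}(S)$ a matrix carrying the row $(r,-x)$, and one must wring from this a contradiction with $r$ being a non-unit. I expect this non-freeness to be the main obstacle: the exact-sequence step and the computation $rS+xS=S$ are essentially formal, whereas the failure of $K$ to be principal is precisely where the hypothesis does real work — note that if $r$ were a unit then $rS=S$ and $K=xS$ would be free, and that $\sum_{i}\delta^{i}(r)R=R$ with $r$ a non-unit forces $\delta\neq 0$.
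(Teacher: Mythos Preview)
The paper does not prove this lemma; it is quoted from Stafford's paper \cite[Corollary~1.6]{Stafford5} and used as a black box. So there is no in-paper proof to compare against, only Stafford's original argument.

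Your exact-sequence reduction and the computation that the hypothesis $\sum_{i\ge 0}\delta^{i}(r)R=R$ is equivalent to $rS+xS=S$ are both correct; this is exactly how Stafford passes from his Theorem~1.2 to Corollary~1.6. One small caveat: the identity $\overline{a\cdot x}=\delta(a)$ you wrote presumes the convention $ax=x\sigma(a)+\delta(a)$. Under the McConnell--Robson convention $xa=\sigma(a)x+\delta(a)$ used elsewhere in the paper, the right action of $x$ on $S/xS\cong R$ is $a\mapsto -\delta\sigma^{-1}(a)$ instead; since $\sigma$ is an automorphism this does not affect the conclusion $\overline r\cdot S=R$, but it is worth stating which convention you are in.

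The genuine gap is exactly where you locate it: non-freeness. You correctly reduce the question to showing that the unimodular row $(r,-x)$ cannot be completed to a matrix in $GL_{2}(S)$, but you stop there, and this is the entire substance of the lemma beyond the formal stable-freeness step. Your Chinese Remainder identity $S/K\cong S/rS\oplus S/xS$ is true but does not by itself obstruct principality of $K$: in any commutative PID, coprime non-units $r,x$ give $rS\cap xS=rxS$ principal with precisely this quotient decomposition. What makes the present situation different, and what Stafford actually exploits, is the $x$-filtration on $S=R[x;\sigma,\delta]$: if $\bigl(\begin{smallmatrix} r & -x\\ c & d\end{smallmatrix}\bigr)$ had a two-sided inverse, the resulting matrix equations impose incompatible constraints on $x$-degrees and on degree-zero coefficients which, combined with the fact that units of $S$ lie in $R$, force $r$ to be a unit in $R$. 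Without supplying this degree argument (or an equivalent device that uses $r\in R$ against $\deg_{x}(x)=1$), the proof is incomplete.
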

This latter lemma will allow us to carry out the first step to achieve our goal.
\begin{lemma}\label{L1}
The subalgebra $\widetilde{A}$ has a stably free right ideal that is non-free.
\end{lemma}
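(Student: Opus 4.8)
The plan is to apply Lemma \ref{L0} directly to the Ore extension $\widetilde A = \K[u][\omega;\sigma,\delta]$, with base ring $R = \K[u]$. For this I need to exhibit a non-unit $r \in \K[u]$ such that $\sum_{i\geq 0}\delta^{i}(r)\,\K[u] = \K[u]$. The obvious candidate is $r = u$ itself: it is certainly a non-unit in $\K[u]$. Recall $\delta$ is the $\sigma$-derivation with $\delta(u) = -\mu^{-1}u$, so $\delta$ acts on $\K[u]$ as a scalar multiple of the grading-type operator on $u$; in particular $\delta(u) = -\mu^{-1}u$ and more generally $\delta^{i}(u) = (-\mu^{-1})^{i} u$ (one checks this by induction, using that $\delta(u^n)$ is determined by the twisted Leibniz rule but here we only need powers of the single element $u$). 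Then the ideal $\sum_{i\geq 0}\delta^{i}(u)\K[u] = \sum_{i\geq 0}(-\mu^{-1})^{i}u\,\K[u] = u\K[u]$, which is \emph{not} all of $\K[u]$ — so $r = u$ fails the hypothesis.

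So the first real step is to choose $r$ more cleverly. The hypothesis $\sum_{i\geq 0}\delta^i(r)\K[u]=\K[u]$ says that $r$ and its successive $\delta$-images generate the unit ideal; since $\K[u]$ is a PID this means $\gcd$ of the $\delta^i(r)$ is a unit. Take $r = u+c$ for a suitable nonzero constant $c\in\K$. Since $\delta$ is $\K$-linear with $\delta(u)=-\mu^{-1}u$ and $\delta(c)=0$ (as $\delta$ kills scalars), we get $\delta(u+c) = -\mu^{-1}u$, and then $\delta^{i}(u+c) = (-\mu^{-1})^i u$ for $i\geq 1$. Hence the ideal generated is $(u+c)\K[u] + u\K[u]$. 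If $c\neq 0$ then $u+c$ and $u$ are coprime in $\K[u]$ (their difference is the unit $c$), so this ideal is all of $\K[u]$. Thus $r = u+1$ (say) is a non-unit of $\K[u]$ satisfying the hypothesis of Lemma \ref{L0}.

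Applying Lemma \ref{L0} with this $r$, we conclude that $K := r\widetilde A \cap \omega\widetilde A$ is a non-trivial, stably free right ideal of $\widetilde A$. It remains only to note that ``non-trivial'' here means precisely non-free: a stably free right ideal of a noetherian domain that is free must be free of rank one, hence principal, and Lemma \ref{L0}'s conclusion of non-triviality is exactly the statement that $K$ is not isomorphic to $\widetilde A$ as a right module, i.e. not free. (One should also check $\widetilde A$ is a noetherian domain so that Lemma \ref{L0} applies: $\K[u]$ is a noetherian domain and an Ore extension by an automorphism $\sigma$ and a $\sigma$-derivation $\delta$ preserves both properties, so $\widetilde A$ is a noetherian domain.) The main thing to get right — and the only place any thought is required — is the choice of $r$: the naive choice $r=u$ does not work because $\delta$ is essentially an Euler operator fixing the ideal $u\K[u]$, and one must break this by a translation $u \mapsto u + c$ with $c \neq 0$ to force coprimality. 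Everything else is a direct citation of Lemma \ref{L0}.
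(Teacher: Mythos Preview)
Your argument is correct and essentially identical to the paper's: both take $r=1+u$ (your $u+c$ with $c\neq 0$), observe that $\delta(r)=-\mu^{-1}u$ so that $r$ and $\delta(r)$ generate $\K[u]$, and invoke Lemma~\ref{L0}. The only substantive addition in the paper is that, after obtaining $K=r\widetilde{A}\cap\omega\widetilde{A}$, it goes on to exhibit explicit generators $a=(\omega+\mu^{-1})(1+u)-\mu^{-1}$ and $b=\omega^2+\mu^{-1}\omega$ for $K$ and proves $K=a\widetilde{A}+b\widetilde{A}$ via a deglex leading-monomial argument; this is not needed for the lemma as stated but is used downstream in Corollary~\ref{C1} to identify $KA=aA+bA$ explicitly.
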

\begin{proof}
In view of the fact that $\K[u]$ is a domain and $\sigma$ is an automorphism, we have that $\widetilde{A}$ is also a domain. The element $r=1+u$ is non-invertible in $\K[u]$, with the property that
\begin{align*}
r+\delta(r)\mu=1+u+(-\mu^{-1}u)\mu=1+u-u=1.
\end{align*}
Thus, $\sum_{i\geq 0}\delta^{i}(r)\K[u]=\K[u]$ and Lemma \ref{L0} asserts that $\widetilde{A}$ has a right stably free ideal $K$ which is non-free. The ideal $K$ is defined by $\{f\in \widetilde{A}\mid rf\in \omega \widetilde{A}\}$ and  is isomorphic to $r\widetilde{A}\cap \omega\widetilde{A}$. Moreover, in the proof of Lemma \ref{L0} it is proved that $\widetilde{A}=r\widetilde{A}+\omega\widetilde{A}$; specifically, we have $1=r(1+\mu \omega)+\omega(-\mu\sigma(r))$. This equality allows us to obtain generators for the right ideal $K$: in effect, we claim that $a=(\omega+\mu^{-1})(1+u)-\mu^{-1}$ and $b=\omega^2+\mu^{-1}\omega$ are polynomials such that $K=a\widetilde{A}+b\widetilde{A}$. To show this, we first note that:
\begin{align*}
ra&=(u+1)(\omega+\mu^{-1})(u+1)-\mu^{-1}(u+1)=u\omega(u+1)+\omega(u+1)+\mu^{-1}(u+1)^{2}-\mu^{-1}(u+1)\\
&=\omega(u+1)(\mu^{-1}u+1)-\mu^{-1}((u+1)^2-u(u+1)-(u+1))=\omega(u+1)(\mu^{-1}u+1)\in \omega \widetilde{A},
\end{align*}
and,
\begin{align*}
rb&=\omega(\omega+\mu^{-1})+u\omega^2+\mu^{-1}u\omega=\omega(\omega+\mu^{-1})+\mu^{-1}\omega u\omega-\mu^{-1}u\omega+\mu^{-1}u\omega\\
&= \omega(\omega+\mu^{-1}u\omega+\mu^{-1})\in \omega \widetilde{A}.
\end{align*}
Suppose that $\B_1$ is ordered by the deglex order $\prec$ with $u\prec \omega$, and let $f$ be a non-zero element in $K$. We claim that if $lm(f)=u^{\delta_1}\omega^{\delta_2}$ is the leading monomial of $f$, then $\delta_1+\delta_2\geq 2$ and $\delta_2\geq 1$: indeed, it is clear that either $\delta_1$ or $\delta_2$ is non-zero. If $\delta_1+\delta_2=1$, we have that $\delta_1=0$ or $\delta_2=0$. In the first case, $f=c_1\omega+c_2u+c_3$ with $c_i\in \K$ for $i=1,2,3$ and $c_1$  not zero. Then,
\begin{align*}
rf=&(u+1)(c_1\omega+c_2u+c_3)= c_1u\omega+c_2u^2+c_3u+c_1\omega+c_2u+c_3\\
=& c_1(\mu^{-1}\omega u-\mu^{-1}u) +c_2u^2+(c_3+c_2)u+c_1\omega+c_3\\
=& \omega(c_1\mu^{-1}u+c_1)+c_2u^2+(c_2+c_3-c_1\mu^{-1})u+c_3\in \omega \widetilde{A}.
\end{align*}
Therefore, $c_1=c_2=c_3=0$ and $f=0$, which is a contradiction. A similar result is obtained if we assume $\delta_1=1$ and $\delta_2=0$; hence $\delta_1+\delta_2\geq 2$. Now, suppose $\delta_2=0$; in such case $f=c_{\delta}u^{\delta}+f_1$, where $lm(f_1)\prec lm(f)$, $\delta\geq 2$ and $c_\delta\neq 0$. So,
\begin{align*}
rf=&(u+1)(c_{\delta}u^{\delta}+f_1)=c_{\delta}u^{\delta+1}+uf_1+f\in \omega A;
\end{align*}
in order that $rf\in \omega A$ necessarily $c_{\delta}=0$, but this contradicts our choice of $f$. Consequently, $f=c_{\delta}u^{\delta_1}\omega^{\delta_2}+ f_1$ where $\delta_1+\delta_2\geq 2$, $\delta_2\geq 1$, $lm(f_1)\prec lm(f)$ and $c_{\delta}$ is a non-zero scalar. In these conditions, $lm(f)$ is divisible by $lm(a)=u\omega$ or $lm(b)=\omega^{2}$. Applying a right division algorithm, we get $f=aq_1+bq_2+h$, where $h$  is reduced with respect to $a$ and $b$. If $h\neq 0$, we have that $lm(h)$ is not divisible neither by $lm(a)$ nor $lm(b)$; i.e., if $lm(h)=u^{\epsilon_1}\omega^{\epsilon_2}$, then $\epsilon_2=0$ or $\epsilon_1+\epsilon_2\leq 1$. But $h=f-aq_1-bq_2\in K$ and we obtain a contradiction. Whence $h=0$, $f=aq_1+bq_2$ and $K=a\widetilde{A}+b\widetilde{A}$.
\end{proof}
\begin{remark}
Lemma \ref{L0} is a corollary of a more general result by Stafford \cite[Theorem 1.2]{Stafford5}: given a noetherian domain $R$ and $S=R[x;\sigma,\delta]$, with $\sigma$ an automorphism of $R$ and $\delta$ a $\sigma$-derivation, if there exists a non-unit $r$ in $R$ and some $s\in R$ such that $S=rS+(x+s)S$, then $S$ has a non-trivial stably free right ideal. This assertion also has a version for Laurent skew polynomial rings and it was used as a unified way for producing non-trivial stably free right ideals over Weyl algebras, rings of polynomials with coefficients in a division ring in at least two variables, group rings of  poly (infinite cyclic) groups and enveloping algebras of non-abelian finite dimensional Lie algebras. However, these modules do not always exist: for example, if $R$ is a division ring, any projective module over $S$ is free (see \cite[Proposition 11.5.3]{McConnell}). Moreover, given $S=R[x;\delta]$ with $R$ a commutative local ring with maximal ideal $Q$ and $\delta$ a non-zero derivation of $R$, it is proved in \cite[Corollary 4.6]{Stafford5} that every stably free right ideal of $S$ is free if and only if $\delta(Q)\subseteq Q$ and Kdim($R$) $=1$.
\end{remark}
The second step is extending this right ideal $K$ to $A$ in such a way that we obtain a non-trivial, stably free right ideal of $A$. To achieve this, we will use the following fact.
\begin{proposition}\cite[Proposition 2.3]{Stafford5}\label{T1}
Let $A$ and $B$ be domains such that $A\subset B$ and $B$ is faithfully flat as left $A$-module, satisfying the following property:
\begin{center}
If $a$ and $b$ are non-zero elements of $B$ such that $ab\in A$, then $a=a_1c$ \quad\quad\quad ($\clubsuit$)\\
and $b=c^{-1}b_1$ for some unit $c$ in $B$ and elements $a_1,b_1\in A$.
\end{center}
Under these conditions, if $P$ is a projective right ideal of $A$ that is not cyclic, then $PB\cong P\otimes_{A}B$ is a projective right ideal of $B$ that is also non-cyclic. Further, if $P$ is stably free then so is $PB$.
\end{proposition}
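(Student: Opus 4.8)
The plan is to dispose of the two formal assertions first and then concentrate entirely on non-cyclicity, which I would establish in contrapositive form. Tensoring the inclusion $P\hookrightarrow A$ with $B$ and using that $B$ is flat as a left $A$-module gives an injection $P\otimes_A B\hookrightarrow A\otimes_A B=B$ whose image is exactly the right ideal $PB$ of $B$; hence $PB\cong P\otimes_A B$, and this is a projective right ideal of $B$ since projectivity is preserved under base change. If moreover $P\oplus A^n\cong A^m$, then applying $-\otimes_A B$ yields $PB\oplus B^n\cong B^m$, so $PB$ is stably free whenever $P$ is. It remains to prove that $PB$ is non-cyclic whenever $P$ is; equivalently, that if $PB=qB$ is cyclic then $P$ is cyclic.

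So assume $PB=qB$. If $P=0$ then $P$ is cyclic and there is nothing to do, so assume $P\neq0$; then $PB=P\otimes_A B\neq0$ by faithful flatness, hence $q\neq0$. The first step is to replace $q$ by an element of $A$: choosing $0\neq p_0\in P\subseteq qB$ and writing $p_0=q\beta$ with $\beta\in B\setminus\{0\}$, the relation $q\beta=p_0\in A$ lets me invoke hypothesis ($\clubsuit$) to get a unit $c\in B$ and $a_1,b_1\in A$ with $q=a_1c$ and $\beta=c^{-1}b_1$. Since $c$ is a unit, $qB=a_1cB=a_1B$, so after replacing $q$ by $a_1$ I may assume $q\in A$. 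Next I would tensor $0\to P\to A\to A/P\to0$ with the flat module $B$, obtaining $(A/P)\otimes_A B\cong B/PB=B/qB$ via a map carrying the class of $a\in A$ to $a+qB$; because $B$ is faithfully flat as a left $A$-module, the canonical map $A/P\to(A/P)\otimes_A B$ is injective, and therefore $P=\ker\bigl(A\to B/qB\bigr)=A\cap qB$. In particular $q\in A\cap qB=P$, so $qA\subseteq P$.

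To finish, I would tensor the short exact sequence $0\to P/qA\to A/qA\xrightarrow{\pi}A/P\to0$ (with $\pi$ the canonical surjection) with $B$. From the resolution $0\to A\to A\to A/qA\to0$ of right $A$-modules, whose first map is left multiplication by $q$, one gets $(A/qA)\otimes_A B\cong B/qB$, and $(A/P)\otimes_A B\cong B/qB$ as above; under these identifications $\pi\otimes\mathrm{id}_B$ is literally the identity of $B/qB$, since already over $A$ the composite $A\to A/qA\xrightarrow{\pi}A/P$ is the quotient map $A\to A/P$. Hence $(P/qA)\otimes_A B=\ker(\pi\otimes\mathrm{id}_B)=0$, and faithful flatness forces $P/qA=0$, i.e. $P=qA$. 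Thus $P$ is cyclic, which is exactly what the contrapositive requires.

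All the displayed exact sequences give their conclusions in one line, so I anticipate no real difficulty in the computations themselves. The two places that carry the argument, and where I would take most care, are the use of ($\clubsuit$) to reduce to the case $q\in A$ — this is precisely the role the hypothesis plays, and without it cyclicity need not descend along $A\subset B$ — and the observation that after base change the comparison map becomes the identity of $B/qB$, which is what allows faithful flatness to pull the equality $PB=qB$ back to $P=qA$. For the background facts on faithfully flat extensions, in particular the injectivity of $M\to M\otimes_A B$, I would follow \cite{Stafford5} and its references.
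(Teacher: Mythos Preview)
The paper does not supply its own proof of this proposition: it is quoted verbatim from \cite[Proposition 2.3]{Stafford5} and used as a black box, so there is nothing in the present paper to compare your argument against. That said, your proof is correct and is essentially the standard faithfully-flat-descent argument one expects here. The two routine claims ($PB\cong P\otimes_A B$, and preservation of projectivity and stable freeness under $-\otimes_A B$) are handled cleanly. For the main point you argue the contrapositive: reduce the cyclic generator $q$ to an element of $A$ via $(\clubsuit)$, identify $P$ with $A\cap qB$ using the injectivity of $A/P\to (A/P)\otimes_A B$ coming from faithful flatness, and then kill $P/qA$ by showing that $\pi\otimes\mathrm{id}_B$ becomes the identity of $B/qB$ under the natural identifications. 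Each of these steps checks out in the noncommutative setting at hand (right $A$-modules tensored with the faithfully flat left $A$-module $B$), and your remark that the injectivity of $M\to M\otimes_A B$ is the standard faithfully-flat fact is accurate. This is, in outline, the same strategy Stafford employs in \cite{Stafford5}; you have reconstructed it correctly.
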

\begin{proposition}
The rings $\widetilde{A}$ and $A$ are domains such that $\widetilde{A}\subset A$ and they satisfy the hypotheses of Theorem \ref{T1}.
\vspace{-2mm}
\begin{proof}
Since $\beta$ is non-zero, both $A$ and $\widetilde{A}$ are domains. Inasmuch as  $\mathcal{B}=\{u^i(du-\lambda du)^jd^k\mid i,j,k\in\mathbb{N}\}$ is a $\K$-basis of $A$, it follows that $\B_2:=\{d^{k}\mid k\in \mathbb{N}\}$ is an $\widetilde{A}$-basis for $A$ as a left $\widetilde{A}$-module: indeed, it is obvious that $\B_2$ generates \,$_{\widetilde{A}}A$. Taking into account that $A$ is a domain, in order to prove linear independence, it is enough to show that if $\sum_{l=1}^ma_ld^l=0$, with $a_l\in\widetilde{A}$, $1\leq l\leq m$, then $a_l=0$ for each $l$. However, $a_l=\sum_{k=1}^{s_l}c_{k}^{(l)}u^{i_k^l}\omega^{j_k^l}$ for certain elements $c_{k}^{(l)}\in\K\setminus\{0\}$, thus
\[0=\sum_{l=1}^ma_ld^l=\sum_{l=1}^{m}\sum_{k=1}^{s_l}c_{k}^{(l)}u^{i_k^l}\omega^{j_k^l}d^l.\]
Since $u^{i_k^l}\omega^{j_k^l}d^l\in \B$, we get $\sum_{l=1}^{m}\sum_{k=1}^{s_l}c_{k}^{(l)}u^{i_k^l}\omega^{j_k^l}d^l=\sum_t d_tx^{\alpha_t}$, with $x^{\alpha}\in \B$ and $d_t:=\sum_{x^{\alpha_t}=u^{i_k^l}\omega^{j_k^l}d^l}c_{k}^{(l)}$. As a consequence, $d_{t}=0$ for all $t$. Note that, given $l$ and $t$, there exists just one $u^{i_k^l}\omega^{j_k^l}\in \B_1$ such that $x^{\alpha_{t}}=u^{i_k^l}\omega^{j_k^l}d^l$, whence the set $\{d_t\}$ coincides with $\{c_{k}^{(l)}\}$. Therefore, all $c_{k}^{(l)}=0$ and $a_l=0$ for all $l$. So, $A$ is $\widetilde{A}$-free and, in particular,  $A$ turns out to be a faithfully flat left $\widetilde{A}$-module. Finally, to prove that condition ($\clubsuit$) is satisfied, we define the following subsets of $A$: set $F_0:=\widetilde{A}$ and $F_n:=F_0U_n$ for $n\geq 1$, where $U_{n}:=\,_\K\langle d^k \mid k\leq n\rangle$. It is clear that $A=\bigcup_{n\in\mathbb{N}}F_n$ and $F_p\subseteq F_q$ for $p<q$. Using multiplication rules in $A$ we obtain  $F_{p}F_q\subseteq F_{p+q}$, and it follows that $\{F_n\}_{n\in \mathbb{N}}$ is a filtration of the algebra  $A$. Let $f,g\in A$ be non-zero elements such that $fg\in F_0$. Since $A=\bigcup_{n\in\mathbb{N}}F_n$, there exist $p$ and $q\in \mathbb{N}$ with the property that $f\in F_{p}\setminus F_{p-1}$ and $g\in F_q\setminus F_{q-1}$. In this way, $f=\sum_{\delta}f_{\delta}d_{\delta}\in F_p$ and $g=\sum_{\epsilon}g_{\epsilon}d_{\epsilon}\in F_q$, where $f_{\delta},g_{\epsilon}\in F_0$, $d_{\delta}\in U_p$ and $d_\epsilon\in U_q$. Hence, $f=a_1x^{\overline{\delta}_1}d^{\delta_1}+\cdots+a_rx^{\overline{\delta}_s}d^{\delta_s}$ with $x^{\overline{\delta}_i}\in \B_1$ and $\delta_i\leq p$ for each $i$; analogously, $g=b_1x^{\overline{\epsilon}_1}d^{\epsilon_1}+\cdots+b_tx^{\overline{\epsilon}_t}d^{\epsilon_t}$, with $x^{\overline{\epsilon}_j}\in \B_1$, and  $\epsilon_j \leq q$. We can suppose $d^{\delta_1}=d^{p}$ and $d^{\epsilon_1}=d^{q}$, so $fg=a_1b_1x^{\overline{\delta}_1+\overline{\epsilon}_1}d^{p+q}+f_0g_0$, with $f_0g_0$ a polynomial in $d$, both of degree less or equal to $p+q$. But $fg\in F_0$, so $p+q=0$ and $\delta_i=\epsilon_j=0$ for all $i$ and $j$, i.e., $f,g\in F_0$. This finishes the proof.
\end{proof}
\end{proposition}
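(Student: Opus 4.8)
The plan is to verify, one at a time, the four hypotheses of Proposition~\ref{T1}: that $\widetilde{A}$ and $A$ are domains, that $\widetilde{A}\subset A$, that $A$ is faithfully flat as a left $\widetilde{A}$-module, and that condition $(\clubsuit)$ holds. The first two points are essentially already available. Since $\beta\neq 0$, the algebra $A$ is a noetherian domain by \cite[Corollary~2.2]{Kirkman}; the ring $\widetilde{A}=\K[u][\omega;\sigma,\delta]$ is a skew polynomial extension of the domain $\K[u]$ along an automorphism, hence a domain itself; and the inclusion $\widetilde{A}\subset A$ is realized by the homomorphism $\phi$ constructed above, which is injective because $\phi(\B_1)$ is contained in the $\K$-basis $\B=\{u^{i}\omega^{j}d^{k}\mid i,j,k\in\mathbb{N}\}$ of \cite[Lemma~2.2]{zhao}.

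For the flatness requirement I would prove the sharper statement that $A$ is a \emph{free} left $\widetilde{A}$-module, with $\B_2=\{d^{k}\mid k\in\mathbb{N}\}$ as a basis. Spanning is immediate from the basis $\B$ of $A$ together with the fact that $\B_1=\{u^{i}\omega^{j}\}$ is a $\K$-basis of $\widetilde{A}$. For linear independence, given a relation $\sum_{l}a_{l}d^{l}=0$ with $a_{l}\in\widetilde{A}$, I would expand each $a_{l}$ in the basis $\B_1$ and thereby rewrite the sum as a $\K$-linear combination of pairwise distinct elements of $\B$; as $\B$ is $\K$-independent, every scalar vanishes, hence every $a_{l}=0$. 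A free module over a nonzero ring is faithfully flat, which settles this point.

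The remaining, and main, task is condition $(\clubsuit)$, which I would treat by a degree filtration of $A$ over $\widetilde{A}$. Put $F_{0}:=\widetilde{A}$ and $F_{n}:=\widetilde{A}+\widetilde{A}d+\cdots+\widetilde{A}d^{n}$, so that $A=\bigcup_{n}F_{n}$ and $F_{p}\subseteq F_{q}$ whenever $p\leq q$. The crucial assertion is $F_{p}F_{q}\subseteq F_{p+q}$; by the left $\widetilde{A}$-module structure it is enough to check the single inclusion $d\widetilde{A}\subseteq F_{1}$, which follows by induction on the degree of a monomial of $\B_1$ from the two identities $du=\omega+\lambda ud$ and $d\omega=(\mu\omega+1)d$, both lying in $\widetilde{A}+\widetilde{A}d$. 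In view of the freeness just established, these identities in fact present $A$ as an Ore extension $\widetilde{A}[d;\tau,\partial]$, where $\tau$ is the automorphism of $\widetilde{A}$ fixing $\K$ with $\tau(u)=\lambda u$ and $\tau(\omega)=\mu\omega+1$; consequently the associated graded ring $\bigoplus_{n}F_{n}/F_{n-1}$ is the skew polynomial ring $\widetilde{A}[\overline{d};\tau]$ over the domain $\widetilde{A}$, hence a domain. Therefore the $d$-degree is additive under multiplication: for non-zero $f\in F_{p}\setminus F_{p-1}$ and $g\in F_{q}\setminus F_{q-1}$, the product of their leading $\widetilde{A}$-coefficients is non-zero (a product in a domain, once a power of $\tau$ has been applied), so $fg\in F_{p+q}\setminus F_{p+q-1}$. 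In particular $fg\in F_{0}=\widetilde{A}$ forces $p=q=0$, i.e.\ $f,g\in\widetilde{A}$, and then condition $(\clubsuit)$ holds trivially with the unit $c=1$ and $a_{1}=f$, $b_{1}=g$. The one delicate point in this programme is the elementary but careful bookkeeping showing that pushing $d$ rightwards past an element of $\widetilde{A}$ raises the $d$-degree by exactly one with a non-vanishing leading coefficient; the rest is formal.
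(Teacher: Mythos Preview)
Your proof is correct and follows essentially the same approach as the paper: both verify freeness of $A$ over $\widetilde{A}$ via the basis $\{d^{k}\}$ and then use the filtration $F_{n}=\sum_{k\leq n}\widetilde{A}\,d^{k}$ together with a leading-term (degree-in-$d$) argument to reduce condition $(\clubsuit)$ to the trivial case $f,g\in\widetilde{A}$. Your explicit identification of $A$ as the Ore extension $\widetilde{A}[d;\tau,\partial]$, so that the associated graded ring is the domain $\widetilde{A}[\overline{d};\tau]$, is a clean way to justify the additivity of $d$-degree and is slightly more transparent than the paper's direct leading-coefficient computation, but the underlying idea is the same.
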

\begin{corollary}\label{C1}
If $A=A(\alpha,\beta,\gamma)$ is a down-up algebra with $\gamma\neq 0$, then $A$ has a non-trivial, stably free right ideal.
\end{corollary}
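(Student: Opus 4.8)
The plan is to obtain the required right ideal of $A$ by extending, via Proposition \ref{T1}, the right ideal of the subalgebra $\widetilde{A}$ produced in Lemma \ref{L1}. Recall that throughout this section $\beta\neq 0$ (so that $A$ is a noetherian domain) and $\K$ has characteristic zero and contains both roots of $t^2-\alpha t-\beta$, so that the algebra $\widetilde{A}=\K[u][\omega;\sigma,\delta]$ is defined. First I would reduce to the case $\gamma=1$: since $\gamma\neq 0$, the isomorphism $A(\alpha,\beta,\gamma)\cong A(\alpha,\beta,1)$ of \cite[Lemma 4.1 (ii)]{carvalho} — already invoked to set up this subsection — lets us assume $A=A(\alpha,\beta,1)$ with $\widetilde{A}\subset A$ the subalgebra described above.

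The next point is to pass from ``non-free'' to ``non-cyclic'', which is the notion that Proposition \ref{T1} transfers. Over any domain $R$, a nonzero cyclic right ideal $aR$ is isomorphic to $R_R$, since right multiplication by $a$ is an injective $R$-module map $R\to aR$; hence $aR$ is free, and a non-free right ideal of a domain is automatically non-cyclic. Applying this to the stably free, non-free right ideal $K$ of the domain $\widetilde{A}$ furnished by Lemma \ref{L1}, we conclude that $K$ is a non-cyclic projective right ideal of $\widetilde{A}$.

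Now I would apply Proposition \ref{T1} to the extension $\widetilde{A}\subset A$: by the proposition established immediately before this corollary, $\widetilde{A}$ and $A$ are domains, $A$ is faithfully flat as a left $\widetilde{A}$-module, and condition ($\clubsuit$) holds. Therefore $KA\cong K\otimes_{\widetilde{A}}A$ is a non-cyclic projective right ideal of $A$, and since $K$ is stably free, so is $KA$. It remains to check that $KA$ is non-free: $A$ is a noetherian domain, hence a right Ore domain with division ring of fractions $Q$, and $KA$ is a nonzero right ideal, so $KA\otimes_A Q\cong Q$ (any nonzero $a\in KA$ becomes a unit in $Q$); were $KA$ free it would then be free of rank one, i.e. cyclic, contradicting the previous paragraph. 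Thus $KA$ is the desired non-trivial stably free right ideal of $A$.

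Essentially all of the technical content is already contained in Lemma \ref{L1} and in the preceding proposition verifying ($\clubsuit$); what remains here is bookkeeping. Accordingly, the only point I expect to require care is the translation between ``non-cyclic'' and ``non-free'' for right ideals of a noetherian (hence Ore) domain — used once to feed Proposition \ref{T1} and once to read off its output — which the short arguments above settle.
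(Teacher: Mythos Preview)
Your argument is correct and follows the paper's own route: invoke Lemma \ref{L1} for the non-free stably free ideal $K\subset\widetilde{A}$, then use the preceding proposition to feed Proposition \ref{T1} and conclude that $KA$ is a non-trivial stably free right ideal of $A$. Your explicit handling of the non-cyclic/non-free translation (needed because Proposition \ref{T1} is phrased in terms of non-cyclic ideals) is a point the paper passes over tacitly, while the paper in turn adds the extra information that $KA=aA+bA$ with the same generators $a,b$ as in Lemma \ref{L1}, which is not required for the corollary itself.
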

\begin{proof}
By Lemma \ref{L1}, the algebra $\widetilde{A}$ has a stably free right ideal $K$ that is not free. Since  $\widetilde{A}$ and $A$ satisfy the hypotheses of Theorem \ref{T1}, the right ideal $KA\cong K\otimes_{\widetilde{A}}A$ is a non-trivial stably free right ideal of $A$. In the proof of Lemma \ref{L1}, we showed that $K=a\widetilde{A}+b\widetilde{A}$ with $a=(\omega+\mu^{-1})(1+u)-\mu^{-1}$ and $b=\omega^2+\mu^{-1}\omega$; thus $KA=(a\widetilde{A}+b\widetilde{A})A$. We shall prove that $KA=aA+bA$: given a non-zero polynomial $f\in aA+bA$, there exist $f_1$, $f_2\in A$ such that $f=af_1+bf_2$. Writing $f_1$ and $f_2$ in terms of $\B$, we have $f=\sum c_iax^{\delta_i}+\sum e_j bx^{\epsilon_j}$. Note that each term in the last expression can be written as $(\lambda_1ax^{\delta'}+\lambda_2bx^{\delta'})x^{\delta}$, where $x^{\delta'}\in \B_1$, $x^{\delta}\in \B_2$ and $\lambda_1, \lambda_2$ are not both zero.
Hence, $f$ can be expressed as a sum of elements in $(a\widetilde{A}+b\widetilde{A})A$ which implies that $f\in (a\widetilde{A}+b\widetilde{A})A$. Since $KA\subseteq aA+bA$, the equality holds.
\end{proof}

\subsection{Case $\gamma=0$}
\noindent
It is known that if $\gamma=0$, the isomorphism $A(\alpha,\beta,0)\cong \K[u][\omega;\theta][d;\sigma,\delta]$ holds, for certain automorphisms $\theta,\sigma$ and a $\sigma$-derivation $\delta$ depending on $\alpha$, $\beta$, see \cite[Theorem 3.3]{Kirkman}. We will present an alternative proof of the existence of this isomorphism using the universal property of skew polynomial rings that will be more suitable for our purpose.
\begin{lemma}\label{L2}
For the down-up algebra $A=A(\alpha,\beta,0)$, there exist $\omega$ and automorphisms $\theta$ and $\sigma$, together with a $\sigma$-derivation $\delta$, such that $A$ is isomorphic to an iterated skew polynomial ring of the form $\K[u][\omega;\theta][d;\sigma,\delta]$.
\end{lemma}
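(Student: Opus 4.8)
The plan is to realize $A=A(\alpha,\beta,0)$ explicitly as an iterated Ore extension, taking $\omega$ to be the element $\omega=[D,U]_\lambda=DU-\lambda UD$ already used above, and then to produce a $\K$-algebra isomorphism via the universal property of skew polynomial rings.

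First I would rewrite the defining relations of $A(\alpha,\beta,0)$ in bracket form: with $\lambda,\mu$ the (nonzero) roots of $t^2-\alpha t-\beta$, so $\alpha=\lambda+\mu$ and $\beta=-\lambda\mu$, the relations $DU^2=\alpha UDU+\beta U^2D$ and $D^2U=\alpha DUD+\beta UD^2$ are equivalent to $[[D,U]_\lambda,U]_\mu=0$ and $[D,[D,U]_\lambda]_\mu=0$; in terms of $\omega$ these read $\omega U=\mu U\omega$ and $D\omega=\mu\omega D$ in $A$, and of course $DU=\lambda UD+\omega$. These three identities dictate the skew-polynomial data. Let $\theta$ be the automorphism of $\K[u]$ determined by $\theta(u)=\mu u$, and form $R:=\K[u][\omega;\theta]$, so that $\omega u=\mu u\omega$; on $R$ define $\sigma$ by $\sigma(u)=\lambda u$, $\sigma(\omega)=\mu\omega$ and $\delta$ by $\delta(u)=\omega$, $\delta(\omega)=0$. (With the opposite Ore convention these scalars are replaced by their inverses, which changes nothing below.) A short computation shows that $\sigma$ is compatible with the relation $\omega u-\mu u\omega=0$, since $\sigma(\omega)\sigma(u)$ and $\mu\,\sigma(u)\sigma(\omega)$ both equal $\lambda\mu\,\omega u$, and that it is bijective, hence $\sigma$ is an automorphism of $R$; likewise $\delta$ is compatible, because $\delta(\omega u-\mu u\omega)=\mu\omega^2-\mu\omega^2=0$, so it extends to a $\sigma$-derivation of $R$. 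This produces the candidate ring $S:=\K[u][\omega;\theta][d;\sigma,\delta]$, which, being an iterated Ore extension of $\K[u]$, is free as a $\K$-module on $\{\,u^i\omega^jd^k : i,j,k\geq 0\,\}$.

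Next I would define a $\K$-algebra homomorphism $\phi\colon S\to A$ in two steps. The map $\K[u]\to A$, $u\mapsto U$, extends to $\K[u][\omega;\theta]\to A$ sending $\omega\mapsto[D,U]_\lambda$ precisely because $[D,U]_\lambda\,U=\mu\,U[D,U]_\lambda=\theta(U)\,[D,U]_\lambda$ holds in $A$; call this extension $f$. It extends in turn to $S=R[d;\sigma,\delta]\to A$ with $d\mapsto D$ once $D\,f(r)=f(\sigma(r))\,D+f(\delta(r))$ is checked for all $r\in R$. As both sides are additive in $r$ and the set of valid $r$ is closed under products (a routine Leibniz-rule computation), it suffices to check $r=u$ and $r=\omega$; these reduce exactly to $DU=\lambda UD+[D,U]_\lambda$ and $D[D,U]_\lambda=\mu[D,U]_\lambda D$, both of which hold in $A(\alpha,\beta,0)$. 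Thus $\phi$ is well defined, with $\phi(u)=U$, $\phi(\omega)=[D,U]_\lambda$, $\phi(d)=D$.

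Finally I would show $\phi$ is bijective by comparing bases. It sends the $\K$-basis $\{u^i\omega^jd^k\}$ of $S$ to $\{\,U^i(DU-\lambda UD)^jD^k : i,j,k\geq 0\,\}$, and by \cite[Lemma 2.2]{zhao}, applied with $a=-\lambda$ and $b=0$, this last set is a $\K$-basis of $A$. Hence $\phi$ maps a $\K$-basis onto a $\K$-basis, so it is a $\K$-linear isomorphism and therefore a $\K$-algebra isomorphism, which is exactly the statement of the lemma. I expect the only slightly delicate points to be checking that $\sigma$ and $\delta$ descend to well-defined maps on the intermediate ring $\K[u][\omega;\theta]$ and that the iterated Ore extension $S$ really has the claimed PBW-type $\K$-basis; granting these standard facts, both the construction of $\phi$ and the verification that it is bijective are routine.
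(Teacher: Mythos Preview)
Your proposal is correct and follows essentially the same approach as the paper: you introduce $\omega=[D,U]_\lambda$, build the iterated Ore extension $\K[u][\omega;\theta][d;\sigma,\delta]$, define the map to $A$ via the universal property of skew polynomial rings, and conclude bijectivity by the basis comparison using \cite[Lemma~2.2]{zhao}. The only cosmetic difference is that you use the left-handed Ore convention $da=\sigma(a)d+\delta(a)$, giving $\theta(u)=\mu u$, $\sigma(u)=\lambda u$, $\sigma(\omega)=\mu\omega$, $\delta(u)=\omega$, whereas the paper uses the opposite convention (so their scalars are the inverses of yours and their $\delta(u)$ picks up a sign); as you note, this changes nothing substantive, and you are slightly more explicit than the paper in verifying that $\sigma$ and $\delta$ descend to well-defined maps on $\K[u][\omega;\theta]$.
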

\begin{proof}
As usual we denote by $U$ and $D$ the obvious generators of $A$. Let $R_1:=\K[u]$ and $R:=\K[u][\omega;\theta]$, where $\theta$ is the automorphism of $R_1$ given by $\theta(u)=\mu^{-1}u$. If $\phi_0:R_1\to A$ is defined by $\phi_{0}(u)=U$, then $\phi_0$ can be extended to a ring homomorphism with the property that, for $a\in R_1$ the following holds:
\begin{align*}
\phi_0(u)[D,U]_{\lambda}=&UDU-\lambda U^2D=\mu^{-1}(\mu UDU-\lambda\mu U^2D)\\
=&\mu^{-1}(DU-\lambda UD)U=[D,U]_{\lambda}\phi_{0}(\theta(u)).
\end{align*}
Taking $y:=[D,U]_{\lambda}$, in \cite[\S 1.2.5]{McConnell} asserts that there exists a unique ring homomorphism $\phi_1:R_{1}[\omega;\theta]\to A$ such that $\phi_1\circ \iota=\phi_0$, with $\iota:R_1\to R_1[\omega;\theta]$ the natural inclusion. Now, consider the ring $R[d;\sigma,\delta]=\K[u][\omega;\theta][d;\sigma,\delta]$, where $\sigma:R\to R$ is the automorphism $\sigma(u)=\lambda^{-1}u$, $\sigma(\omega)=\mu^{-1}\omega$, and $\delta$ the $\sigma$-derivation on $R$ determined by $\delta(u)=-\lambda^{-1}\omega$ and $\delta(\omega)=0$. For the aforementioned homomorphism $\phi_1$, note that
\begin{align*}
\phi_1(u)D=UD=&\lambda^{-1}DU-\lambda^{-1}[D,U]_{\lambda}\\
=&D\phi_{1}(\sigma(u))+\phi_{1}(\delta(u));\\
\phi_1(\omega)D=&DUD-\lambda D^2=\mu^{-1}(D^2U-\lambda DUD)=\mu^{-1}D[D,U]_{\lambda}\\
&=D\phi_1(\sigma(\omega));
\end{align*}
if we set now $y:=D$, again from \cite[\S 1.2.5]{McConnell}, we obtain a unique ring homomorphism $\phi_2:R[d;\sigma,\delta]\to A$ such that $\phi_2\circ \iota'=\phi_1$, with $\iota':R\to R[d;\sigma,\delta]$ the inclusion. In particular, $\phi_2(u)=U$, $\phi_2(\omega)=[D,U]_{\lambda}$ and $\phi_2(d)=D$; thus, $\phi_2$ is surjective. Since $\B=\{u^i\omega^jd^k\mid i,j,k\in\mathbb{N}\}$ is a $\K$-basis of $R[d;\sigma,\delta]$ and $\phi_1(\B)=\{U^i(DU-\lambda UD)^jD^k\mid i,j,k\in\mathbb{N}\}$, we know that $\phi_2$ is an isomorphism and we have proved the statement.
\end{proof}
\begin{proposition}\label{P1}
The down-up algebra $A(\alpha,\beta,0)$ has a non-trivial, stably free right ideal.
\end{proposition}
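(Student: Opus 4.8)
The plan is to apply Stafford's criterion (Lemma~\ref{L0}) directly to $A=A(\alpha,\beta,0)$, using the iterated skew polynomial presentation of Lemma~\ref{L2}; in contrast with the case $\gamma\neq 0$, no auxiliary subalgebra and no extension result of the type of Proposition~\ref{T1} will be needed. By Lemma~\ref{L2} we may write $A\cong R[d;\sigma,\delta]$ with $R=\K[u][\omega;\theta]$, $\theta(u)=\mu^{-1}u$, $\sigma(u)=\lambda^{-1}u$, $\sigma(\omega)=\mu^{-1}\omega$, $\delta(u)=-\lambda^{-1}\omega$, $\delta(\omega)=0$, and $\delta(ab)=\delta(a)\sigma(b)+a\delta(b)$; in $R$ one has $\omega u=\mu u\omega$. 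Since $\K[u]$ is a noetherian domain and $\theta$ an automorphism, $R$ is a noetherian domain, so $R$ and $x:=d$ are eligible for Lemma~\ref{L0} as soon as we produce the required element $r$. I would also note that $R$ is a connected $\mathbb{N}$-graded domain (give $u,\omega$ degree $1$; the relation $\omega u-\mu u\omega$ is homogeneous), so that $R^{\times}=\K^{\times}$.

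The whole point is the choice of $r$. The relevant observation is that $\delta$ preserves total degree, sends $u^{i}\omega^{j}$ to a scalar multiple of $u^{i-1}\omega^{j+1}$ for $i\geq 1$, and kills every power of $\omega$; hence $\delta$ induces the zero derivation on $R/\omega R\cong\K[u]$, so an element of $\K[u]$ such as $1+u$ (the obvious transcription from the case $\gamma\neq 0$) cannot satisfy Lemma~\ref{L0}, and $r$ must reduce to a nonzero scalar modulo $\omega$. I would therefore take
\[
r:=1+\omega u,
\]
which is a non-unit because $r\notin R_{0}=\K$. One computes $\delta(r)=\omega\,\delta(u)=-\lambda^{-1}\omega^{2}$ and $\delta^{2}(r)=-\lambda^{-1}\delta(\omega^{2})=0$, so $\sum_{i\geq 0}\delta^{i}(r)R=rR+\omega^{2}R$; and since $\omega u=\mu u\omega$ gives $(1+\omega u)(1-\omega u)=1-\mu^{-1}\omega^{2}u^{2}$, we get $1=r(1-\omega u)+\omega^{2}(\mu^{-1}u^{2})\in rR+\omega^{2}R$, hence $\sum_{i\geq 0}\delta^{i}(r)R=R$.

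With these facts, Lemma~\ref{L0} applies verbatim and shows that $K=rA\cap dA=(1+\omega u)A\cap dA$ is a non-trivial, stably free right ideal of $A$, which is exactly the statement. The only real obstacle is discovering $r$: the candidates suggested by the case $\gamma\neq 0$ fail for the reason explained above, but passing to $1+\omega u$ and using that here $\omega u=\mu u\omega$ has no lower-order correction (unlike $\omega u=\mu u\omega+u$ when $\gamma\neq 0$) makes the verification completely elementary.
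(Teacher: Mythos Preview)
Your proof is correct and follows essentially the same approach as the paper: both apply Lemma~\ref{L0} directly to the presentation $A\cong R[d;\sigma,\delta]$ from Lemma~\ref{L2}, and your choice $r=1+\omega u$ differs from the paper's $r=1+u\omega$ only by the relation $\omega u=\mu\,u\omega$. The paper goes on to exhibit explicit generators $a=d^{2}$ and $b=du\omega+\lambda^{-1}\mu\omega^{2}+\mu^{2}d$ for $K$ via a leading-monomial argument, but this extra work is not required for the proposition as stated.
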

\begin{proof}
By Lemma \ref{L2}, the isomorphism $A\cong \K[u][\omega;\theta][d;\sigma,\delta]$ holds, then it is enough to show that the latter ring satisfies the statement. Taking $r=1+u\omega$, we have that
\begin{align*}
r(1-u\omega)-\delta(r)(\mu^{-2}\lambda u^2)=&(1+u\omega)(1-u\omega)-(-\lambda^{-1}\mu^{-1}\omega^2)(\mu^{-2}\lambda u^2)\\
=& 1-u\omega u\omega +\mu^{-3}\omega^2u^2\\
=&1-\mu^{-3}\omega^2u^2+\mu^{-3}\omega^2u^2=1;
\end{align*}
since $A$ is a domain and $\sigma$ is an automorphism,  by Lemma \ref{L0} the algebra $A$ has a stably free right ideal $K$ which is non-free. The ideal $K$ is given by $\{f\in A\mid rf\in dA\}$ and turns out to be isomorphic to $rA\cap dA$. We assert that $K$ is generated by the polynomials $a=d^2$, and $b=du\omega+\lambda^{-1}\mu\omega^2+\mu^{2}d$: indeed, we start noting that
\begin{align*}
ra=&(u\omega +1)d^2=uwd^2+d^2\\
=&\mu^{-2}\lambda^{-2}d^2u\omega-(\lambda^{-2}\mu^{-2}+\lambda^{-1}\mu^{-3})d\omega^2\in dA,
\end{align*}
and
\begin{align*}
rb=&(u\omega+1)(du\omega+\lambda^{-1}\mu\omega^2+\mu^2d)\\
=&(d\sigma(u\omega)+\delta(u\omega))u\omega +\lambda^{-1}\mu u\omega^{3}+\mu^2(d\sigma(u\omega)+\delta(u\omega))+du\omega+\lambda^{-1}\mu\omega^2+\mu^2d\\
=&(\lambda^{-1}\mu^{-1}du\omega-\lambda^{-1}\mu^{-1}\omega^2)u\omega+\lambda^{-1}\mu u\omega^3+\mu^2(\lambda^{-1}\mu^{-1}du\omega-\lambda^{-1}\mu^{-1}\omega^2)+ du\omega+\lambda^{-1}\mu\omega^2+\mu^2d\\
=&d(\lambda^{-1}u^2\omega^2+(\lambda^{-1}\mu+1)u\omega+\mu^2)-\lambda^{-1}\mu^{-1}\omega^2u\omega+\lambda^{-1}\mu u\omega^3-\lambda^{-1}\mu\omega^2+\lambda^{-1}\mu\omega^2\\
=&d(\lambda^{-1}u^2\omega^2+(\lambda^{-1}\mu+1)u\omega+\mu^2)\in dA.
\end{align*}
In order to prove the claim, we suppose that $\B$ is ordered by the deglex order $\prec$ with $u\prec \omega\prec d$. Let $f$ be a non-zero polynomial in $A$. We shall show that if $f\in K$, then $lm(f)$ is divisible by $lm(a)=d^2$ or $lm(b)=u\omega d$. Let $lm(f)=u^{\delta_1}\omega^{\delta_2}d^{\delta_3}$ be the leading monomial of $f$. A straightforward reasoning allows to derive that if $f\in K$, then necessarily $\delta_3\geq 1$. We consider the following possibilities:
\begin{itemize}
\item $\delta_1=\delta_2=0$: in such case $\delta_3\geq 2$, since otherwise $f=c_1d+c_2\omega+c_3u+c_4$ with $c_i\in \K$, $i=1,2,3,4$ and $c_4$ non-zero. So,
\begin{align*}
(u\omega+1)f=&d(c_1\lambda^{-1}\mu^{-1}u\omega+c_1)+c_2u\omega^2+c_3\mu u^2\omega-c_1\lambda^{-1}\mu^{-1}\omega^2+c_4u\omega+c_2\omega+c_3u+c_4\in dA
\end{align*}
implies that $c_1=c_2=c_3=c_4=0$; i.e., $f=0$ which is contrary to our choice of $f$. Thus, $\delta_3\geq 2$.
\item $\delta_3=1$: in this situation we must have $\delta_1,\delta_2\geq 1$. By the above, we get that either $\delta_1$ or $\delta_2$ is not zero. Suppose $\delta_1\neq 0$ and $\delta_2=0$. Thus $f=cu^{\delta_1}d+f_1$ with $lm(f_1)\prec lm(f)$ and $c\neq 0$; then
\begin{align*}
(u\omega+1)f=&c\mu^{\delta_1}(d\sigma(u^{\delta_1+1}\omega)+\delta(u^{\delta+1}\omega))+c(d\sigma(u^{\delta_1})+\delta(u^{\delta_1}))+u\omega f_1+f_1\\
=&c\mu^{\delta_1}(\lambda^{-(\delta_1+1)}\mu^{-1}du^{\delta_1+1}\omega-\lambda^{-1}\mu^{-1} p_{\delta_1+1}(\lambda^{-1},\mu)u^{\delta_1}\omega^2)
+c\lambda^{-\delta_1}du^{\delta_1}\\
&-c\lambda^{-1}p_{\delta_1}(\lambda^{-1},\mu)u^{\delta_1-1}\omega+u\omega f_1+f_1\\
=&d(c\lambda^{-(\delta_1+1)}u^{\delta_1+1}\omega+c\lambda^{-\delta_1}u^{\delta_1})-c\lambda^{-1}\mu^{\delta_1-1}p_{\delta_1+1}(\lambda^{-1},\mu)u^{\delta_1} \omega^2\\
&-c\lambda^{-1}p_{\delta_1}(\lambda^{-1},\mu)u^{\delta_1-1}\omega+u\omega f_1+f_1\in dA,
\end{align*}
where $p_t(\lambda^{-1},\mu)=\lambda^{-(t-1)}\mu^{t-1}+\lambda^{-(t-2)}\mu^{t-2}+\cdots+\lambda^{-1}\mu+1$ is the expression that appears in the calculation of $\delta(u^t)$ and $\delta(u^{t}\omega)$. Specifically, using induction over $t\geq 1$, it can be shown that $\delta(u^t)=-\lambda^{-1}p_t(\lambda^{-1},\mu)u^{t-1}\omega$, and  $\delta(u^{t}\omega)=-\lambda^{-1}\mu^{-1}p_t(\lambda^{-1},\mu)u^{t-1}\omega^2$ for $t\geq 1$. Let $\epsilon_1=c\lambda^{-1}\mu^{\delta_1-1}p_{\delta_1+1}(\lambda^{-1},\mu)$, $\epsilon_2=c\lambda^{-1}p_{\delta_1}(\lambda^{-1},\mu)$ and $c'\in\K$ the coefficient of $u^{\delta_1-1}\omega$ in $f_1$. Thus $\mu^{\delta_1-1} c'-\epsilon_1=0$ and $c'-\epsilon_2=0$. Rewriting these equations, we obtain that $cp_{\delta_1+1}(\lambda^{-1},\mu)=cp_{\delta_1}(\lambda^{-1},\mu)$. Hence $c(p_{\delta_1+1}(\lambda^{-1},\mu)-p_{\delta_1}(\lambda^{-1},\mu))=0$; but $p_{\delta_1+1}(\lambda^{-1},\mu)-p_{\delta_1}(\lambda^{-1},\mu)=\lambda^{-\delta_1}\mu^{\delta_1}$ and, since $\lambda$ and $\mu$ are non-zero, it follows that $c=0$. This is a contradiction, therefore $\delta_2\geq 1$.
\item $\delta_3=1$ and $\delta_2\neq 0$. If $\delta_1=0$, the polynomial $f$ is written as $f=c\omega^{\delta_2}d+f_1$ with $lm(f_1)\prec lm(f)$ and $c\neq 0$. In this case
\begin{align*}
(u\omega+1)f=&c(d\sigma(u\omega^{\delta_2+1})+\delta(u\omega^{\delta_2+1}))+cd\sigma(\omega^{\delta_2})+u\omega f_1+f_1\\
=&c\lambda^{-(\delta_2+1)}\mu^{-1}du\omega^{\delta_2+1}-c\lambda^{-1}\mu^{-(\delta_2+1)}\omega^{\delta_2+2}+c\mu^{-\delta_2}d\omega^{\delta_2}+u\omega f_1+f_1\\
=&d(c\lambda^{-(\delta_2+1)}\mu^{-1}u\omega^{\delta_2+1}+c\mu^{-\delta_2}\omega^{\delta_2})-c\lambda^{-1}\mu^{-(\delta_2+1)}\omega^{\delta_2+2}+u\omega f_1+ f_1\in dA.
\end{align*}
Since each term in $u\omega f_1$ is multiplied by $u$ and deg($f_1$) $\leq \delta_2+1$, it is necessary that $c\lambda^{-1}\mu^{-(\delta_2+1)}=0$. Thus $c=0$, which contradicts our choice of $f$. Consequently, $\delta_1\geq 1$ .
\end{itemize}
Therefore, given  a non-zero polynomial $f\in K$ and applying a right division algorithm, there exist $q_1,q_2,h\in A$ such that $f=aq_1+bq_2+h$, with $h$ reduced with respect to $a$ and $b$. If $h\neq 0$, then $h$ is not divisible neither by $lm(a)$ nor by $lm(b)$. But $h=f-aq_1-bq_2\in K$ and we get a contradiction. In consequence $h=0$, $f=aq_1+bq_2$ and $K=aA+bA$.
\end{proof}
\begin{remark}
It is proved in \cite{Jordan} that a down-up algebra is isomorphic to an ambiskew ring. Specifically, in that paper it is showed that $A(\alpha,\beta,\gamma)\cong \K[\omega][u,\sigma][d;\sigma^{-1},\delta]$, where $\omega=du-\lambda ud$, $\sigma$ is the automorphism over $\K[\omega]$ given by $\sigma(\omega)=\mu\omega +\gamma$ extended to $\K[\omega][u;\sigma]$ by setting $\sigma(u)=\lambda u$, and $\delta$ the $\sigma^{-1}$-derivation with $\delta(\K[\omega])=0$ and $\delta(u)=-\lambda^{-1}\omega$. We could have tried to apply directly the results from $\cite{Stafford5}$ to this ring in order to obtain a non-free, stably free right ideal. Nevertheless, despite this isomorphism, the element $r$ in Lemma \ref{L0} cannot be attained in a natural way using this approach; for this reason we decided to consider the cases $\gamma\neq 0$ and $\gamma=0$ independently.
\end{remark}
\section{Stable rank}\label{s4}
\noindent
In this last section we assume additionally that $\K$ is an algebraically closed field. Recall that a \-u\-ni\-mo\-du\-lar row $\vu=(u_1,u_2,\ldots,u_s)$ with entries in a ring $S$ is said to be \emph{stable} if there exist $r_1,\ldots,r_{s-1}$ such that $\vu'=(u_1+u_sr_1,\ldots,u_{s-1}+u_sr_{s-1})$ is also a unimodular row. The \emph{stable rank} of $S$ is defined as the least non-negative integer $t$ with the property that every unimodular row of length $t+1$ is stable, see \cite{Lam6}, \cite[Chapter 11]{McConnell} and references therein for features and interesting examples of stable rank. Furthermore, if sr($S$) denotes the stable rank of $S$, the Stafford's Stable Range Theorem states that if $S$ is right noetherian and rKdim($S$) $=d<\infty$, where rKdim($S$) denotes the right Krull dimension of $S$ in the sense of Rentschler and Gabriel, then sr($S$) $\leq$ rKdim($S$)$+1$, see \cite{Stafford3}.\\
\\
In \cite[Theorem 4.1]{Bavula4} Bavula and Lenagan showed that if $A=A(\alpha,\beta,\gamma)$ is a down-up algebra with $\beta\neq 0$, then the right Krull dimension of $A$ is equal to 2 if and only if char($\K$)$=0$, $\alpha+\beta=1$ and $\gamma\neq 0$; otherwise, the right Krull dimension of $A$ is 3. Since $A\cong A^{op}$ via the map $D\mapsto U^{\circ}$ and $U\mapsto D^{\circ}$ \cite[\S1]{Kirkman}, we have that rKdim$(A)=$ lKdim($A$) thus we will simply refer to Kdim$(A)$. These values of Kdim($A$), combined with results of the previous section, will allow us to establish bounds of sr$(A)$. Before doing this, recall that if $\K$ is a field, $\K_0$ its prime subfield and $t$ the transcendence degree of $\K$ over $\K_0$, then the \emph{Kronecker dimension} of $\K$ is defined to be $t$ if char($\K$) $>0$, and $t+1$ if char($\K$) $=0$.
\begin{theorem}\label{P2}
Let $A=A(\alpha,\beta,\gamma)$ a noetherian down-up algebra. We have the following bounds of sr$(A)$:
\begin{enumerate}[(i)]
\item If $\alpha+\beta=1$ and $\gamma\neq 0$ then  $2\leq$ sr($A$) $\leq 3$.
\item If $\alpha+\beta=1$ and $\gamma=0$ then $3\leq$ sr($A$) $\leq 4$.
\item Otherwise, $2\leq$ sr($A$) $\leq 4$.
\end{enumerate}
\end{theorem}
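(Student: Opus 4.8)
\noindent The plan is to obtain the three upper bounds from Stafford's Stable Range Theorem fed with $\operatorname{Kdim}(A)$, and the lower bounds from the non-free stably free modules built in Section \ref{s3}, sharpening the estimate in the distinguished case $\alpha+\beta=1$, $\gamma=0$ by means of the Kmax dimension. Throughout I would use that $A$ is noetherian (because $\beta\neq 0$) and that $A\cong A^{op}$, so that left and right hypotheses are interchangeable.

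\noindent First I would record the Krull dimension. Since $\operatorname{char}(\K)=0$ (in force from Section \ref{s3}) and $\K$ is algebraically closed, the Bavula--Lenagan theorem \cite{Bavula4} gives $\operatorname{Kdim}(A)=2$ exactly in case (i) and $\operatorname{Kdim}(A)=3$ in cases (ii) and (iii); as $A$ is right noetherian, Stafford's Stable Range Theorem \cite{Stafford3} then yields $\operatorname{sr}(A)\le\operatorname{Kdim}(A)+1$, which is the asserted upper bound $3$ in case (i) and $4$ in cases (ii) and (iii).

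\noindent For the lower bound $\operatorname{sr}(A)\ge 2$ in cases (i) and (iii) I would appeal to Corollary \ref{C1} if $\gamma\neq 0$, and to Proposition \ref{P1} if $\gamma=0$: in either case $A$ has a non-trivial stably free right ideal $K$, so $K\oplus A\cong A^{2}$ with $K$ not free, and since $A$ is a noetherian domain $K$ has rank $1$. Then $A$ carries a non-free stably free module of rank $1$, so by the cancellation property of the stable range (\cite[Chapter 11]{McConnell}) one cannot have $\operatorname{sr}(A)\le 1$; hence $\operatorname{sr}(A)\ge 2$.

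\noindent The remaining point, $\operatorname{sr}(A)\ge 3$ in case (ii), is where the real work lies. Here $\gamma=0$ and $\alpha+\beta=1$, the latter being equivalent to $(1-\lambda)(1-\mu)=0$ (since $\alpha+\beta=1-(1-\lambda)(1-\mu)$); interchanging $\lambda$ and $\mu$ if necessary I would assume $\mu=1$, and then the defining relations show that $\omega=du-\lambda ud$ is central, that $A/\omega A$ is a quantum plane of Krull dimension $2$, and --- via the ambiskew presentation of \cite{Jordan}, cf.\ Lemma \ref{L2} --- that $A\cong\K[\omega,u][d;\sigma^{-1},\delta]$ is an Ore extension of the commutative polynomial ring $\K[\omega,u]$ in which $\omega$ is central. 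From this description the plan is to compute the Kmax dimension of $A$ and to invoke the associated lower bound for the stable rank --- this is the point at which the algebraically-closed and characteristic-zero hypotheses enter, through the Kronecker dimension of $\K$ --- so as to conclude $\operatorname{sr}(A)\ge 3$ (one sufficient outcome being a stably free right $A$-module of rank $2$ that fails to be free). I expect this to be the hard part: Section \ref{s3} produced non-free ideals, a rank-$1$ phenomenon, directly from Stafford's criterion, whereas the lower bound $3$ concerns projective modules of rank $2$, and it is precisely their control that the Kmax-dimension estimate must supply.
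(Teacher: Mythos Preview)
Your treatment of the upper bounds and of the lower bound $\operatorname{sr}(A)\ge 2$ coincides with the paper's: Bavula--Lenagan for $\operatorname{Kdim}(A)$, Stafford's Stable Range Theorem for the upper estimates, and the non-free stably free ideal of Section~\ref{s3} together with \cite[Theorem 11.3.7(i)]{McConnell} for the bound $\operatorname{sr}(A)\ge 2$.

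The gap is in your argument for $\operatorname{sr}(A)\ge 3$ in case~(ii). You propose to ``compute the Kmax dimension of $A$ and invoke the associated lower bound for the stable rank'', but there is no such lower bound: the only inequality relating $\operatorname{Kmax}$ and $\operatorname{sr}$ is the Stafford-type estimate $\operatorname{sr}(S)\le \operatorname{Kmax}(S)+1$ (Remark~\ref{R4}(iii)), which goes the wrong way for your purpose. In the paper $\operatorname{Kmax}$ is brought in only later, in Theorem~\ref{P3}, and there it is used to \emph{lower the upper bound}, not to raise the lower one. Likewise, your suggestion that this route would yield a non-free stably free module of rank~$2$ is unsupported: nothing in the $\operatorname{Kmax}$ machinery manufactures such a module.

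The paper's argument for (ii) is far more direct and bypasses $\operatorname{Kmax}$ entirely. When $\alpha+\beta=1$ and $\gamma=0$, Carvalho--Musson \cite[Proposition~4.2]{carvalho} exhibit a two-sided ideal $I\lhd A$ with $A/I\cong \K[x,y]$. Since the stable rank of a factor ring never exceeds that of the ring, $\operatorname{sr}(\K[x,y])\le \operatorname{sr}(A)$. Now Suslin \cite[Theorem~10]{Suslin} shows that $\operatorname{sr}(\K[x_1,\dots,x_n])=n+1$ whenever $n$ is at most the Kronecker dimension of $\K$; this is where the Kronecker dimension actually enters, and it gives $\operatorname{sr}(\K[x,y])=3$, hence $\operatorname{sr}(A)\ge 3$. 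So the key idea you are missing is simply ``pass to a commutative polynomial quotient and quote Suslin'', rather than any internal analysis of $A$ via $\operatorname{Kmax}$.
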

\begin{proof}
Given an arbitrary ring $S$, it is well known that if $M$ is a stably free $S$-module and rank($M$) $\geq$ sr($S$), then $M$ is free with dimension equal to rank($M$), see \cite[Theorem 11.3.7 (i)]{McConnell}. In consequence, by Corollary \ref{C1} and Proposition \ref{P1} we get that sr$(A)\geq 2$ for any noetherian down-up algebra $A$. Because Kdim$(A)=2$ for the case (i), the Stable Range Theorem asserts that sr($A$) $\leq 3$ and the inequality is obtained.\\
For (ii), we have that $2\leq$ sr($A$) $\leq 4$; however, in \cite[Proposition 4.2]{carvalho} the authors proved that $\K[x,y]\cong A/I$ for some two-sided ideal $I$  of $A$ when $\alpha+\beta=1$ and $\gamma=0$. Since the stable rank of a quotient is not bigger than the stable rank of the ring, it follows that  sr$(\K[x,y])\leq$ sr($A$). Furthermore, Suslin proved in \cite[Theorem 10]{Suslin} that if $l$ is the Kronecker dimension of $\K$ and $n\leq l$, then sr($\K[x_1,\ldots,x_n]$) $=n+1$. In particular, sr($\K[x,y]$) $=3$ and $3\leq$ sr($A$) $\leq 4$.
\end{proof}

Computing the exact value of the stable rank of an arbitrary ring is a very difficult task. However, as a significant example, the stable rank of commutative polynomial rings over fields was determined by Suslin \cite{Suslin}. In the noncommutative setting, it is well known that the stable rank of the $n$-th Weyl algebra $A_n(\K)$ is 2 when char($\K$) $=0$ \cite{Stafford2}. Tintera  showed in \cite{Tintera} that if $\h$ is the Heisenberg Lie algebra of dimension $n$ over a field $\K$ ``large enough'', then sr($\U(\h)$) $=n$. The Kmax dimension of a ring was the main tool used by him in order to obtain this equality. We will use an analogous argument for computing the exact value of sr($A$), when $A$ is a down-up algebra with  $\alpha+\beta=1$ and $\gamma=0$.\\
\\
In order to recall the definition of the Kmax dimension, we begin by considering the deviation of a poset \cite[Section 6.1]{McConnell}: let $\PP$ be a poset, $a,b\in \PP$ and $a\geq b$. The factor of $a$ and $b$ is the subposet of $\PP$ defined as $\PP_{a,b}=\{x\in \PP\mid a\geq x\geq b\}$. To define the deviation of $\PP$, or dev($\PP$) for short, we say that dev($\PP$) $=-\infty$ if $\PP$ is trivial. If $\PP$ is non-trivial but satisfies the d.c.c., then
dev ($\PP$) $=0$. For a general ordinal $\alpha$, we define dev($\PP$) $=\alpha$ provided:
\begin{enumerate}[(i)]
\item dev($\PP$) $\neq \beta<\alpha$;
\item in any descending chain of elements of $\PP$, all but finitely many factors have deviation less than $\alpha$.
\end{enumerate}
Now, we recall the definition of the Kmax dimension of a ring $S$.
\begin{definition}
Let $S$ be an associative ring with identity and denote by $\M_S$ the set of maximal right ideals of $S$.
\begin{enumerate}[(i)]
\item A right ideal $I$ of $S$ is called a \textbf{Jacobson right ideal} if $I=J(I)$, where $J(I):=\bigcap\{M\in \M_{S}\mid I\subseteq M\}$.
\item Let $\J\LL(S)$ be the set of Jacobson right ideals of $S$ partially ordered by inclusion. The \textbf{Kmax dimension} of $S$ is defined to be the
deviation of the poset $(\J\LL(S),\subseteq)$ and we denote it by Kmax($S$).
\end{enumerate}
\end{definition}
\begin{remark}\label{R4}
(i) Let $(\LL(S),\subseteq)$ be the poset of right ideals of $S$. Note that $(\J\LL(S),\subseteq)$ is a subposet of $(\LL(S),\subseteq)$. So, if rKdim$(S)$ exists, we have that Kmax($S$) $\leq$ rKdim($S$). In particular if $S$ is a right noetherian ring, this inequality always holds.
\\
(ii) There exist rings for which the inequality in (i) is strict: if $S=\mathbb{C} [x]_{(x)}[y]$, with $\mathbb{C}[x]_{(x)}$ denoting the localization of $\mathbb{C}[x]$ at powers of $x$, then Kmax($S$) $=1<$ Kdim($S$) $=2$ (see \cite{Stafford6}, remark to Proposition 1.6). Furthermore, for $\h$ a non-abelian nilpotent Lie algebra of dimension $n$, Tintera proved in \cite[Lemma 3]{Tintera} that Kmax($\U(\h)$) $<$ Kdim($\U(\h)$) $=n$.
\\
(iii) It follows from \cite{Stafford3} or \cite[Theorem B]{Stafford6} that if $S$ is a right noetherian ring, a Kmax version of the Stable Range Theorem holds; i.e., if Kmax($S$) exists and is finite, then sr($S$) $\leq $ Kmax($S$) $+1$. The latter explains the reason leading us to introduce the Kmax dimension.
\end{remark}
Below we summarize some important properties satisfied by the Kmax dimension.

\begin{lemma}\cite[Lemmas 1 and 2]{Tintera}\label{L3}  (i)\, Given a ring extension $S \subset T$ such that $T_S$ is a faithfully flat module, we have Kmax($S$) $\leq $ Kmax($T$).

(ii)\, Let $S$ be a domain for which Kmax($S$) is defined and let $z\in S$ be a normal element. There is an inequality
\begin{center}
Kmax($S$) $\leq$ sup$\{$ Kmax($S/zS$), Kmax($S_{(z)}$)$\}$,
\end{center}
where $S_{(z)}$ denotes the localization of $S$ at the set of powers of $z$.
\end{lemma}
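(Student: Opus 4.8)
The plan is to deduce both statements from two standard facts about the deviation of a poset (see \cite[\S 6.1]{McConnell}), which I will not reprove: (a) if $\PP$ is a subposet of a poset $\mathcal{Q}$ and $\mathrm{dev}(\mathcal{Q})$ exists, then $\mathrm{dev}(\PP)$ exists and $\mathrm{dev}(\PP)\leq\mathrm{dev}(\mathcal{Q})$; and (b) if $\mathrm{dev}(\mathcal{Q}_1)$ and $\mathrm{dev}(\mathcal{Q}_2)$ exist, then so does $\mathrm{dev}(\mathcal{Q}_1\times\mathcal{Q}_2)$, and it equals $\sup\{\mathrm{dev}(\mathcal{Q}_1),\mathrm{dev}(\mathcal{Q}_2)\}$ (a routine transfinite induction). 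By an \emph{order-embedding} I mean a map that both preserves and reflects the order, so that its image is an isomorphic copy of the domain. With these in hand, to prove (i) it suffices to construct an order-embedding $\J\LL(S)\hookrightarrow\J\LL(T)$, and to prove (ii) an order-embedding $\J\LL(S)\hookrightarrow\J\LL(S/zS)\times\J\LL(S_{(z)})$.

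For (i), I would set $\Phi(I):=J_T(IT)$, the intersection in $T$ of all maximal right ideals containing the right ideal $IT$; this is a Jacobson right ideal of $T$, and $\Phi$ is visibly order-preserving. The crux is the identity $\Phi(I)\cap S=I$ for $I\in\J\LL(S)$. One inclusion is trivial since $I\subseteq IT\subseteq\Phi(I)$. For the other, take a maximal right ideal $M$ of $S$ with $I\subseteq M$; because $T/MT\cong (S/M)\otimes_S T$ and $T$ is faithfully flat over $S$, the right ideal $MT$ is proper, so it sits inside some maximal right ideal $N$ of $T$, and then $N\cap S$ is a proper right ideal containing $M$, hence equals $M$. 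As $IT\subseteq MT\subseteq N$ we get $\Phi(I)\subseteq N$, so $\Phi(I)\cap S\subseteq M$; intersecting over all such $M$ and using that $I$ is Jacobson yields $\Phi(I)\cap S\subseteq I$. The identity $\Phi(I)\cap S=I$ forces $\Phi$ to be injective and order-reflecting, hence an order-embedding, and (i) follows.

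For (ii), normality of $z$ together with $S$ being a domain guarantees that $S/zS$ is a ring, that the powers of $z$ form an Ore set (so $S_{(z)}=S[z^{-1}]$ is defined), and that these constructions interact well with maximal right ideals: those of $S/zS$ are the images of the maximal right ideals of $S$ containing $z$, those of $S_{(z)}$ are the extensions $MS_{(z)}$ of the maximal right ideals $M$ of $S$ with $z\notin M$, and $MS_{(z)}\cap S=M$ in that case. In particular $\J\LL(S/zS)$ and $\J\LL(S_{(z)})$ both embed into $\J\LL(S)$, so by (a) their deviations exist. I would then put $\rho(I):=J_S(I+zS)/zS$ and $\lambda(I):=J_{S_{(z)}}(IS_{(z)})$, both order-preserving maps out of $\J\LL(S)$. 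Writing $I=\bigcap\{M\in\M_S\mid I\subseteq M\}$ and splitting the maximal right ideals above $I$ according to whether they contain $z$, one identifies $J_S(I+zS)$ with the intersection of those that do, and $J_{S_{(z)}}(IS_{(z)})\cap S$ with the intersection of those that do not; intersecting the two recovers $I$. Consequently $I$ is determined by the pair $(\rho(I),\lambda(I))$, and the same reconstruction shows $(\rho,\lambda)$ reflects the order, so it is an order-embedding $\J\LL(S)\hookrightarrow\J\LL(S/zS)\times\J\LL(S_{(z)})$, and (ii) follows from (a) and (b).

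The part that needs real care is the localization bookkeeping in (ii). A Jacobson right ideal of $S$ need not be saturated with respect to the powers of $z$, so the description of $J_{S_{(z)}}(IS_{(z)})$ and of its contraction to $S$ is not purely formal: one has to know that the maximal right ideals of $S_{(z)}$ containing $IS_{(z)}$ are precisely the extensions of the maximal right ideals $M\supseteq I$ of $S$ with $z\notin M$, and that contracting their intersection gives exactly $\bigcap\{M\mid I\subseteq M,\ z\notin M\}$. Both hinge on the observation that multiplication by a normal element $z$ acts bijectively on every simple right $S$-module on which it does not act as zero --- which is the precise point where normality of $z$ is used. Everything else, including the verification that $\rho$ and $\lambda$ take values in the claimed posets of Jacobson right ideals, is straightforward once (a) and (b) are granted.
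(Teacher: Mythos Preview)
The paper does not give its own proof of this lemma: it is simply quoted from \cite[Lemmas 1 and 2]{Tintera}, so there is no in-paper argument to compare your proposal against. Your proof is correct and is essentially the standard one. The order-embedding $\Phi(I)=J_T(IT)$ in (i) with the verification $\Phi(I)\cap S=I$ via faithful flatness is exactly the expected argument. For (ii), your reconstruction $I=J_S(I+zS)\cap\bigl(J_{S_{(z)}}(IS_{(z)})\cap S\bigr)$ is the right idea, and you have correctly isolated the one genuinely noncommutative ingredient: that a normal element $z$ acts on any simple right $S$-module either as zero or bijectively, so that a maximal right ideal $M$ of $S$ with $z\notin M$ extends to a maximal right ideal $MS_{(z)}$ with $MS_{(z)}\cap S=M$.

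One small overstatement worth flagging: you assert that the maximal right ideals of $S_{(z)}$ are \emph{precisely} the extensions $MS_{(z)}$ of maximal right ideals $M$ of $S$ with $z\notin M$. The surjectivity of this correspondence (that every maximal right ideal of $S_{(z)}$ contracts to a maximal right ideal of $S$) is not obvious and you do not actually use it. What your reconstruction needs is only the forward direction --- that each such $M$ yields a maximal $MS_{(z)}$ containing $IS_{(z)}$ --- which gives $J_{S_{(z)}}(IS_{(z)})\cap S\subseteq\bigcap\{M:I\subseteq M,\ z\notin M\}$, and that inclusion (together with $I\subseteq J_{S_{(z)}}(IS_{(z)})\cap S$) is already enough to recover $I$ from $(\rho(I),\lambda(I))$ and to see that $(\rho,\lambda)$ reflects order. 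So the argument stands; just trim the claim to what you prove.
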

For another features of Kmax, as well as for additional descriptions and remarks, we refer to \cite{Stafford6}.\\
\\
Recall that given a $\K$-algebra $R$, an automorphism $\sigma$ of $R$ and a central element $a$ of $R$, the \-ge\-ne\-ra\-li\-zed Weyl algebra $R(\sigma,a)$ is defined as the algebra generated by $X^+$ and $X^-$ subject to the relations: $X^-X^+=a$, $X^+X^-=\sigma(a)$, $X^-\sigma(b)=bX^-$ and $X^+b=\sigma(b)X^+$ for all $b\in R$. For these algebras, Bavula and van Oystaeyen established in \cite[Theorem 1.2]{Bavula3} the following result for computing the Krull dimension of $T=R(\sigma,a)$ when $R$ is commutative.
\begin{proposition}\label{T2}
Let $R$ be a commutative noetherian ring with Kdim($R$) $=m$ and let $T=R(\sigma,a)$ be a \-ge\-ne\-ra\-li\-zed Weyl algebra. The Krull dimension Kdim($T$) is $m$ unless there is a height $m$ maximal ideal $P$ of $R$ such that one of the following conditions holds:
\begin{enumerate}[(i)]
\item $\sigma^n(P)=P$, for some $n>0$;
\item $a\in \sigma^n(P)$ for infinitely many $n$.
\end{enumerate}
If there is an ideal $P$ as above such that (i) or (ii) holds, then Kdim($T$) $= m + 1$.
\end{proposition}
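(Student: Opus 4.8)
The plan is to establish the two easy bounds $\mathrm{Kdim}(T)\ge m$ and $\mathrm{Kdim}(T)\le m+1$ first, and then to decide between $m$ and $m+1$ by exploiting the $\mathbb{Z}$-grading $T=\bigoplus_{n\in\mathbb{Z}}T_n$ with $T_0=R$, together with the localization at the powers of $X^+$. The lower bound is routine: $T$ is free, hence faithfully flat, as a left (and right) $R$-module, with $R$-basis $\{(X^+)^n:n\ge1\}\cup\{1\}\cup\{(X^-)^n:n\ge1\}$, so $\mathrm{Kdim}(T)\ge\mathrm{Kdim}(R)=m$ by the standard behaviour of Krull dimension under faithfully flat extensions (cf.\ \cite[Chapter 6]{McConnell}); note this step is insensitive to $\sigma$ and $a$.

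For the upper bound and the dichotomy, the key point is that the powers of $X^+$ form an Ore set in $T$ once $a$ is regular (equivalently, once $R$ is a domain and $a\ne0$; the general case is reduced to this by passing to the prime factors of $R$), and that inverting $X^+$ makes $X^-$ redundant, since $X^+X^-=\sigma(a)$ forces $X^-=(X^+)^{-1}\sigma(a)$ in the localization. Hence $T_{(X^+)}\cong R[\theta,\theta^{-1};\sigma]$, a skew Laurent extension of the commutative noetherian ring $R$. I would then invoke the known computation of the Krull dimension of such extensions: $\mathrm{Kdim}\bigl(R[\theta,\theta^{-1};\sigma]\bigr)=m$ unless some height-$m$ prime $P$ of $R$ satisfies $\sigma^{n}(P)=P$ for some $n>0$, in which case it equals $m+1$ — this is exactly condition (i), and it already delivers the uniform bound $\mathrm{Kdim}(T_{(X^+)})\le m+1$.

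What remains — and this is the heart of the proof — is to control the $X^+$-torsion part of $\mathrm{mod}\,T$, i.e.\ the finitely generated $T$-modules $N$ with $N_{(X^+)}=0$, and to glue it to the localized part via the deviation. Here the genuine difficulty is that $X^+$ is \emph{not} normal in $T$ (since $X^+T$ is not a two-sided ideal), so one cannot simply invoke a normal-element reduction as in Lemma \ref{L3}(ii); instead one must stratify these torsion modules by the $\sigma$-orbits $\{\sigma^{n}(P)\}$ of the height-$m$ primes $P$ of $R$, observing that such a module is supported, in the weight-module sense, on the zero locus of the family $\{\sigma^{n}(a):n\in\mathbb{Z}\}$. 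The dichotomy then reads: if no orbit of a height-$m$ prime is periodic and none meets the zero set of $a$ infinitely often (conditions (i) and (ii) both fail), every such torsion module has Krull dimension $\le m$, because over the complement of the singular locus the generalized Weyl algebra behaves like an Azumaya-type extension whose critical composition factors are governed by $R$; combined with the previous paragraph this forces $\mathrm{Kdim}(T)=m$. Conversely, if condition (ii) holds for some $P$, the orbit of $P$ crosses the zero locus of $a$ infinitely often, and one builds an explicit cyclic ``singular weight'' $T$-module whose submodule lattice contains an infinite ladder of factors of deviation $m$, forcing $\mathrm{Kdim}(T)\ge m+1$, and with the uniform upper bound this gives $\mathrm{Kdim}(T)=m+1$; the case where (i) holds is handled analogously through the skew-Laurent part. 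The main obstacle I anticipate is precisely this last step: making the stratification by $\sigma$-orbits interact cleanly with the Rentschler--Gabriel deviation, and in particular verifying uniformly that, when (i) and (ii) both fail, the $X^+$-torsion modules contribute nothing beyond $\mathrm{Kdim}=m$ — this parameter-sensitive bookkeeping is the real content, everything else being formal.
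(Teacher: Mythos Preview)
The paper does not prove this proposition at all: it is quoted verbatim as \cite[Theorem 1.2]{Bavula3} (Bavula--van Oystaeyen), introduced with the phrase ``Bavula and van Oystaeyen established in \cite[Theorem 1.2]{Bavula3} the following result,'' and then used as a black box in the proof of Theorem~\ref{P3}. So there is no ``paper's own proof'' to compare against; your proposal is an attempt to reprove a cited theorem that the authors deliberately import.

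That said, your outline is broadly the right shape for the Bavula--van Oystaeyen argument: the free $R$-basis $\{(X^{\pm})^n\}$ gives the lower bound, the localization $T_{(X^+)}\cong R[\theta^{\pm1};\sigma]$ handles the skew-Laurent part and produces condition~(i), and the $X^+$-torsion (weight) modules, stratified by $\sigma$-orbits of maximal ideals meeting $V(a)$, are what make condition~(ii) appear. You are also right that the delicate point is the uniform bound on the torsion side when (i) and (ii) both fail; in the original paper this is done not by an Azumaya-type argument but by a careful analysis of critical $T$-modules via their weight decompositions over $R$, reducing to chains of $R$-modules along a single orbit. Your sketch is honest about this being the gap (``making the stratification by $\sigma$-orbits interact cleanly with the Rentschler--Gabriel deviation''), and indeed filling it is essentially the content of \cite{Bavula3}; but for the purposes of the present paper no proof is required, only the citation.
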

To prove the main result of this section, we develop reasonings inspired into those carried out by Carvalho and Musson in \cite[\S 5.]{carvalho2}.\\
\\
In \cite[\S 2.2]{Kirkman} it is showed that an arbitrary noetherian down-up algebra is isomorphic to a \-ge\-ne\-ra\-li\-zed Weyl algebra: in fact, taking $R=\K[x,y]$, $\phi$ the automorphism of $R$ defined by $\phi(x)=y$, $\phi(y)=\alpha y+\beta x+\gamma$ and $a=x$, the algebra $A(\alpha,\beta,\gamma)$ is isomorphic to $R(\phi,x)$ under the isomorphism $\varphi$ sending $X^+$ to $D$ and $X^-$ to $U$; in particular, $x$ and $y$ correspond to $UD$ and $DU$, respectively. Additionally, if  $\alpha+\beta=1$, $\gamma=0$ and the roots $\lambda$ and $\mu$ of $t^2-\alpha t-\beta$ are different, then case 2 of \cite[\S 1.4]{carvalho} holds and we have that
\begin{center}
$\omega_1=\beta x+y$\\
$\omega_2=-x+y$
\end{center}
are such that $\{1,\omega_1,\omega_2\}$ is a basis of the subspace of $\K[x,y]$ generated by $1$, $x$, $y$, and moreover $\phi(\omega_1)=\omega_1$ and $\phi(\omega_2)=-\beta \omega_2$. In this case $\omega_2$ is identified with $\omega=DU-UD$ through $\varphi$.\\
\\
Benkart and Roby introduced in \cite{Benkart1} (see also \cite{carvalho}) the following recursive relation in order to study Verma modules of $A(\alpha,\beta,\gamma)$:
\begin{equation}\label{RR}
s_{n}=\alpha s_{n-1}+\beta s_{n-2}+\gamma.
\end{equation}
From \cite[Lemma 2.3]{carvalho} it follows that for all $n\in \mathbb{Z}$, the automorphism $\phi$ satisfies
\begin{center}
$\phi^{-n}\langle x-s_0,y-s_1\rangle = \langle x-s_n,y-s_{n+1}\rangle$,
\end{center}
where $\langle x-s_0,y-s_1\rangle$ denotes the two-sided ideal generated by $x-s_0$ and $y-s_1$.
For $\alpha^{2}+4\beta\neq 0$ (i.e., when the roots of polynomial $t^2-\alpha t-\beta$ are different) and $\alpha +\beta =1$, the solution to (\ref{RR}) is given by \cite[Proposition 2.12(i)]{Benkart1}:
\begin{equation}\label{RRS}
s_n=c_1\lambda^{n}+c_2\mu^{n}+\frac{\gamma n}{(2-\alpha)}, \text{ (necessarily $\alpha\neq 2$)}
\end{equation}
for certain fixed scalars $c_1,c_2\in \K$ which depend on the established initial conditions.
\begin{lemma}\label{L4}
Let $A(\alpha,\beta,0)$ be a down-up algebra with $\alpha+\beta=1$, such that roots $1,\mu$ are different and $\mu$ is not a root of unity. If $Q$  is a maximal ideal of $\K[x,y]$ such that $x\in \phi^{n}(Q)$ for infinitely many $n$, then $\phi^{n}(Q)=Q$ for some $n\geq 1$.
\end{lemma}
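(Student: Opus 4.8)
The plan is to translate the hypothesis into a statement about the sequence $(s_n)$ governed by the recursion (\ref{RR}), and then exploit its explicit closed form. Since $\K$ is algebraically closed, every maximal ideal of $\K[x,y]$ has the form $\langle x-s_0,y-s_1\rangle$ for some $s_0,s_1\in\K$; write $Q$ this way, and let $(s_n)_{n\in\mathbb{Z}}$ be the two-sided sequence these values determine as initial conditions of the recursion $s_n=\alpha s_{n-1}+\beta s_{n-2}$ (here $\gamma=0$). By \cite[Lemma 2.3]{carvalho}, $\phi^{-n}\langle x-s_0,y-s_1\rangle=\langle x-s_n,y-s_{n+1}\rangle$ for all $n\in\mathbb{Z}$, so $\phi^{n}(Q)=\langle x-s_{-n},y-s_{-n+1}\rangle$. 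Since $\K$ is algebraically closed, this is precisely the ideal of polynomials vanishing at $(s_{-n},s_{-n+1})$, hence $x\in\phi^{n}(Q)$ exactly when $s_{-n}=0$. Thus the hypothesis is equivalent to saying that $s_m=0$ for infinitely many $m\in\mathbb{Z}$.

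Next I would bring in the explicit solution. As $\alpha+\beta=1$, the roots of $t^2-\alpha t-\beta$ are $1$ and $\mu$ with $\mu\neq1$, so (\ref{RRS}) with $\lambda=1$ and $\gamma=0$ gives $s_n=c_1+c_2\mu^{n}$ for all $n\in\mathbb{Z}$, with $c_1,c_2\in\K$ determined by $s_0,s_1$; note $\mu\neq0$ since $\beta\neq0$. The key step is to show that $s_m=0$ for infinitely many $m$ forces $c_1=c_2=0$. If $c_2=0$ this is immediate, since then $s_m=c_1$ vanishes for at least one $m$. If $c_2\neq0$, then $\mu^{m}=-c_1/c_2$ for infinitely many $m$; the right-hand side is a fixed nonzero scalar (in particular $c_1\neq0$), and choosing two distinct exponents $m_1>m_2$ among them yields $\mu^{m_1-m_2}=1$, contradicting the hypothesis that $\mu$ is not a root of unity. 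Hence $c_1=c_2=0$ and $s_n=0$ for every $n$.

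In particular $s_0=s_1=0$, so $Q=\langle x,y\rangle$, and applying \cite[Lemma 2.3]{carvalho} with $n=1$ gives $\phi^{-1}(Q)=\langle x-s_1,y-s_2\rangle=\langle x,y\rangle=Q$, whence $\phi(Q)=Q$; so $n=1$ works. (Equivalently, $\phi(\langle x,y\rangle)=\langle y,\alpha y+\beta x\rangle=\langle x,y\rangle$ because $\beta\neq0$.)

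I do not expect a genuine obstacle here: the only real content is the dichotomy in the second paragraph, which is a one-line argument once the closed form of $s_n$ is in hand. The remainder is bookkeeping, chiefly making sure the shift between $\phi^{n}$ and $\phi^{-n}$ is handled consistently and that ``infinitely many $n$'' does produce infinitely many vanishing indices of $(s_m)$. It may be worth remarking that this lemma is exactly the implication ``(ii) $\Rightarrow$ (i)'' of Proposition \ref{T2} (with $P=Q$ and $a=x$) in the situation at hand, which is why it is the tool needed to pin down Kdim$(A)$ below.
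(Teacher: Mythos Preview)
Your proof is correct and follows essentially the same route as the paper: translate the condition $x\in\phi^{n}(Q)$ into $s_{-n}=0$, invoke the closed form $s_n=c_1+c_2\mu^{n}$, and use that $\mu$ is not a root of unity to force $s_n\equiv 0$, whence $Q=\langle x,y\rangle$ is $\phi$-stable. The only cosmetic difference is that the paper first replaces $Q$ by a suitable $\phi$-translate so as to assume $s_0=0$ from the outset, which pins down $c_1=-c_2=\frac{s_1}{1-\mu}$ and lets a single further vanishing $s_n$ (rather than two) finish the argument.
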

\begin{proof}
Let $Q=\langle x-s_0, y-s_1\rangle$ for certain $s_0,s_1\in \K$. By hypothesis, we can suppose that $x\in Q$, namely $Q=\langle x, y-s_1\rangle$ and $s_0=0$. Using the initial conditions $s_0=0$ and $s_1=s$, the equation (\ref{RRS}) can be written as
\begin{equation*}
s_n= \frac{s}{1-\mu}-\frac{s}{1-\mu}\mu^{n}.
\end{equation*}
If $s_n=0$ for $n\geq 1$, we get $\frac{s}{1-\mu}(1-\mu^n)=0$. Since $\mu$ is not a root of unity, necessarily $s=0$ and $s_n=0$ for all $n$. Thus, the proof is obtained.
\end{proof}

\begin{theorem}\label{P3}
Let $A=A(\alpha,\beta,0)$ be a down-up algebra with $\alpha+\beta=1$. If one the following conditions holds
\begin{enumerate}[(i)]
\item $\lambda=\mu=1$, or
\item $\mu\neq 1$ and it is not a root of unity,
\end{enumerate}
then sr($A$) $=3$.
\end{theorem}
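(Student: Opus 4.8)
The starting point is Theorem~\ref{P2}(ii), which already gives $3\le\mathrm{sr}(A)\le 4$, so only $\mathrm{sr}(A)\le 3$ is in question. As $A$ is noetherian with $\mathrm{Kdim}(A)=3<\infty$, the dimension $\mathrm{Kmax}(A)$ is defined and finite, and by the Kmax-version of the Stable Range Theorem recorded in Remark~\ref{R4}(iii) it suffices to prove $\mathrm{Kmax}(A)\le 2$; this cannot be read off from the Krull dimension, which is $3$ when $\gamma=0$, so the whole content of the argument is to exhibit the gap $\mathrm{Kmax}<\mathrm{Kdim}$. In case~(i) this is already available: there $\alpha=2$, $\beta=-1$ and $A(2,-1,0)$ is the enveloping algebra of the three-dimensional Heisenberg Lie algebra $\h$, a non-abelian nilpotent Lie algebra of dimension $3$, so Remark~\ref{R4}(ii) gives $\mathrm{Kmax}(A)<\mathrm{Kdim}(\U(\h))=3$, i.e.\ $\mathrm{Kmax}(A)\le 2$, and the conclusion follows.

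Assume now we are in case~(ii). From $\alpha+\beta=1$ one gets $(1-\lambda)(1-\mu)=1-\alpha-\beta=0$, so $\mu\neq 1$ forces $\lambda=1$ and $\beta=-\mu$; in particular the two roots are distinct, which puts us in the situation of Lemma~\ref{L4}. The plan is to feed the normal element $z:=\varphi(\omega_2)=DU-UD$ of $A$ into Lemma~\ref{L3}(ii); its normality follows from $\phi(\omega_2)=-\beta\omega_2=\mu\omega_2$, which translates into $Dz=\mu zD$, $Uz=\mu^{-1}zU$, and $z$ centralizing $\K[UD,DU]$. Lemma~\ref{L3}(ii) then reduces the task to the two bounds $\mathrm{Kmax}(A/zA)\le 2$ and $\mathrm{Kmax}(A_{(z)})\le 2$. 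The first is routine: modulo $z$ the generators $U$ and $D$ commute and, because $\gamma=0$ and $\alpha+\beta=1$, the defining relations of $A$ become consequences of $DU=UD$; the generalized Weyl algebra presentation shows that no further collapse occurs, so $A/zA$ is the polynomial ring $\K[\overline U,\overline D]$, of Krull dimension $2$, and Remark~\ref{R4}(i) yields $\mathrm{Kmax}(A/zA)\le 2$.

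The heart of the matter is $A_{(z)}$. Using $A\cong R(\phi,x)$ with $R=\K[x,y]=\K[\omega_1,\omega_2]$ (the substitution $\omega_1=\beta x+y$, $\omega_2=-x+y$ is invertible because $\beta+1=1-\mu\neq 0$), inverting $z$ amounts to inverting the powers of $\omega_2$ in the base, so $A_{(z)}\cong R''(\phi,x)$ is a generalized Weyl algebra over the commutative noetherian ring $R''=\K[\omega_1,\omega_2^{\pm 1}]$, which has Krull dimension $2$ and all of whose maximal ideals have height $2$. I would then apply the Bavula--van Oystaeyen criterion, Proposition~\ref{T2}: it is enough to check that no maximal ideal $P$ of $R''$ satisfies $\phi^{n}(P)=P$ for some $n>0$, nor $x\in\phi^{n}(P)$ for infinitely many $n$; granting this, $\mathrm{Kdim}(A_{(z)})=2$ and therefore $\mathrm{Kmax}(A_{(z)})\le 2$ again by Remark~\ref{R4}(i). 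Since $\K$ is algebraically closed, such a $P$ is $\langle\omega_1-c,\omega_2-d\rangle$ with $d\neq 0$, and its $\phi$-orbit is $\{(c,\mu^{k}d):k\in\mathbb{Z}\}$; this orbit is infinite because $\mu$ is not a root of unity and $d\neq 0$, which rules out the first possibility, while the second is excluded by Lemma~\ref{L4}, since $x\in\phi^{n}(P)$ for infinitely many $n$ would force the $\phi$-orbit of the associated maximal ideal of $\K[x,y]$ to be finite. Combining the two bounds gives $\mathrm{Kmax}(A)\le 2$, whence $\mathrm{sr}(A)\le\mathrm{Kmax}(A)+1\le 3$, and together with Theorem~\ref{P2}(ii) we conclude $\mathrm{sr}(A)=3$.

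The main obstacle is precisely this localization step: one must be certain that inverting $z=DU-UD$ genuinely removes from the base ring the finite-orbit (``periodic'') maximal ideals responsible for $\mathrm{Kdim}(A)$ being $3$ rather than $2$, and that no member of such an orbit can contain the generalized Weyl algebra datum $x$. This is exactly where the distinctness of the roots, the assumption that $\mu$ is not a root of unity, Lemma~\ref{L4}, and the algebraic closedness of $\K$ (used to list the maximal ideals explicitly) all come into play; by contrast, the quotient $A/zA$ and case~(i) are comparatively soft.
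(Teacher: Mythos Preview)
Your proof is correct and follows essentially the same route as the paper: reduce to $\mathrm{Kmax}(A)\le 2$, dispose of case~(i) via the Heisenberg enveloping algebra, and in case~(ii) apply Lemma~\ref{L3}(ii) with the normal element $\omega=DU-UD$, bounding $\mathrm{Kdim}(A_{(\omega)})$ by the Bavula--van Oystaeyen criterion and using Lemma~\ref{L4} together with the non-root-of-unity hypothesis to rule out both alternatives in Proposition~\ref{T2}. The only cosmetic differences are that the paper bounds $\mathrm{Kmax}(A/\omega A)$ by the cheaper observation that $\omega$ is regular in the domain $A$, so $\mathrm{Kdim}(A/\omega A)<\mathrm{Kdim}(A)=3$ (rather than identifying the quotient as $\K[\bar U,\bar D]$), and that for the periodicity check it quotes \cite[Lemma~2.2(ii)]{carvalho} instead of computing the $\phi$-orbit $\{(c,\mu^{k}d)\}$ directly as you do.
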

\begin{proof}
In the first case $\alpha=2$, $\beta=-1$ and $A(\alpha,\beta,0)\cong U(\h)$, where $\h$ denotes the Heisenberg algebra of dimension 3. The assertion follows from \cite[Corollary 1]{Tintera}. Suppose $\lambda=1$, $\mu\neq 1$ and $\mu$ is not a root of unity. Under these conditions $\omega=DU-UD$ is a normal element of $A$ and $A/\omega A$ is a ring. Since $A$ is a domain and $\omega$ is not zero, then $\omega$ is a regular element of $A$ and, therefore, Kdim($A/\omega A$) $<$ Kdim($A$) $=3$, see \cite[Lemma 6.3.9]{McConnell}. On the other hand, if $\Lambda=\{\omega^{i}\mid i\in \mathbb{N}\}$, then $\Lambda$ is an Ore set and the ring $A_{(\omega)}:=A\Lambda^{-1}$ exists. Under the isomorphism $A\cong \K[x,y](\phi,x)$ described above, the element $\omega$ is sent to $\omega_2=y-x\neq 0$. Thus, $A_{(\omega)}\cong \K[x,y]_{(\omega_2)}(\phi, x)$ is a generalized Weyl algebra. To prove that Kdim($A_{(\omega)}$) $=$ Kdim($\K[x,y]_{(\omega_2)}$) $=2$, we must show that neither (i) nor  (ii) in Theorem \ref{T2} is satisfied. By Lemma \ref{L4} it is enough to demonstrate that for any maximal ideal $P$ of $\K[x,y]_{(\omega_2)}$ and $n>0$, we have $\phi^{n}(P)\neq P$. This is equivalent to prove that if $Q$ is a maximal ideal of $\K[x,y]$ such that $\sigma^{n}(Q)=Q$, then $\omega_2\in Q$. Thus, for $Q=\langle \omega_1-a_1, \omega_2-a_2 \rangle$ and, since $\mu$ is not a root of unity, from
\cite[Lemma 2.2(ii)]{carvalho} it follows that $a_2=0$ and we get the statement about Kdim($A_{(\omega)}$). Hence, by Lemma \ref{L3}(ii), we have  Kmax($A$) $\leq 2$ and the Stable Range Theorem asserts that sr($A$) $\leq 3$. The equality  follows from Proposition \ref{P2} (ii).
\end{proof}
\begin{remark}\label{R5}
In \cite[Section 5]{carvalho2} it was proved that if $A(\alpha,\beta,\gamma)$ is a down-up algebra such that the polynomial $t^2-\alpha t-\beta$ equals $(t-\mu)^2$, with $\mu\neq 1$ and $\mu$ a primitive $n$-th root of unity, then $\omega=DU-\mu UD+\frac{\gamma}{\mu-1}$ is a normal element of $A$ and Kdim($A_{(\omega)}$) $=2$. From this and Lemma \ref{L3}(ii) we obtain the following.
\end{remark}
\begin{corollary}
Let $A=A(\alpha,\beta,\gamma)$ a down-up algebra such that $\alpha+\beta\neq 1$. If
\begin{enumerate}[(i)]
\item $t^2-\alpha t-\beta=(t-\mu)^2$ with $\mu$ a primitive $n$-th root of unity, or
\item $\gamma=0$, $t^2-\alpha t-\beta=(t-\mu)(t-\lambda)$ with $\lambda\neq \mu$ and $\frac{\lambda}{\mu}$ not a root of unity,
\end{enumerate}
then $2\leq$ sr($A$) $\leq 3$.
\end{corollary}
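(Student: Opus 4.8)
The plan is to obtain the lower bound exactly as in the proof of Theorem~\ref{P2}, and the upper bound by showing Kmax($A$) $\le 2$ and invoking the Kmax version of the Stable Range Theorem (Remark~\ref{R4}(iii)). First, in both cases $\beta\neq 0$ ($\beta=-\mu^2$ in (i), $\beta=-\lambda\mu$ in (ii), both non-zero), so $A$ is a noetherian domain; by Corollary~\ref{C1} (if $\gamma\neq 0$) or Proposition~\ref{P1} (if $\gamma=0$), $A$ has a rank-one stably free right ideal that is not free, whence sr($A$) $\ge 2$ by \cite[Theorem 11.3.7(i)]{McConnell}, just as in Theorem~\ref{P2}.

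For the upper bound the tool is a non-zero normal element $\omega$ of $A$. Since $\alpha+\beta\neq 1$ --- which in case (i) also forces $\mu\neq 1$, and in case (ii) is accompanied by $\gamma=0$ --- \cite[Theorem 4.1]{Bavula4} gives Kdim($A$) $=3$. As $\omega$ is regular (being non-zero in a domain) and normal, \cite[Lemma 6.3.9]{McConnell} yields Kdim($A/\omega A$) $<3$, hence $\le 2$; also $\{\omega^i\mid i\in\mathbb N\}$ is an Ore set, so $A_{(\omega)}$ exists and is noetherian. Granting Kdim($A_{(\omega)}$) $=2$, Remark~\ref{R4}(i) gives Kmax($A/\omega A$) $\le 2$ and Kmax($A_{(\omega)}$) $\le 2$; Lemma~\ref{L3}(ii) then gives Kmax($A$) $\le 2$, and Remark~\ref{R4}(iii) gives sr($A$) $\le$ Kmax($A$)$+1\le 3$.

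It thus remains to exhibit $\omega$ and prove Kdim($A_{(\omega)}$) $=2$. In case (i) this is exactly Remark~\ref{R5} (from \cite[Section 5]{carvalho2}): one takes $\omega=DU-\mu UD+\frac{\gamma}{\mu-1}$. In case (ii), since $\lambda/\mu$ is not a root of unity at least one of $\lambda,\mu$ is not, and the hypotheses are symmetric in $\lambda,\mu$, so after relabelling we may assume $\mu$ is not a root of unity; then take $\omega=DU-\lambda UD$, which is non-zero and normal (as seen from the presentation of Lemma~\ref{L2}). Under the isomorphism $A\cong\K[x,y](\phi,x)$ the element $\omega$ corresponds to $y-\lambda x$ with $\phi(y-\lambda x)=\mu(y-\lambda x)$, so $A_{(\omega)}\cong\K[x,y]_{(y-\lambda x)}(\phi,x)$ is a generalized Weyl algebra over the commutative noetherian ring $R':=\K[x,y]_{(y-\lambda x)}$, which has Krull dimension $2$. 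By Proposition~\ref{T2} it suffices to show that no maximal ideal $P$ of $R'$ satisfies $\phi^n(P)=P$ for some $n>0$ or $x\in\phi^n(P)$ for infinitely many $n$. Such a $P$ is the image of some $Q=\langle x-s_0,y-s_1\rangle$ with $s_1\neq\lambda s_0$, and by \cite[Lemma 2.3]{carvalho} $\phi^{-n}\langle x-s_0,y-s_1\rangle=\langle x-s_n,y-s_{n+1}\rangle$, where $(s_n)$ solves $s_n=\alpha s_{n-1}+\beta s_{n-2}$, so (the roots being distinct) $s_n=c_1\lambda^n+c_2\mu^n$; here $s_1-\lambda s_0=c_2(\mu-\lambda)$, so $c_2\neq 0$. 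A short case analysis on whether $c_1=0$, using that $\mu$ and $\lambda/\mu$ are not roots of unity and $\lambda,\mu\neq 0$, then shows that for such $Q$ neither $\phi$-periodicity nor "$s_n=0$ for infinitely many $n$" can hold; thus neither exceptional condition of Proposition~\ref{T2} occurs and Kdim($A_{(\omega)}$) $=$ Kdim($R'$) $=2$.

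The main obstacle should be the case-(ii) verification: one must first single out which maximal ideals $Q$ of $\K[x,y]$ survive the localization at $y-\lambda x$ --- exactly those with $s_1\neq\lambda s_0$, i.e.\ $c_2\neq 0$ --- and then rule out, for precisely those, both $\phi$-periodicity and the condition "$x\in\phi^n(Q)$ for infinitely many $n$". This is where the assumption that $\lambda/\mu$ is not a root of unity enters essentially, and where the choice of $\omega$ (equivalently, which root is named $\mu$) matters: the wrong choice leaves a $\phi$-fixed maximal ideal of the form $\langle x-c,\,y-\mu c\rangle$ in $R'$, which would push Kdim($A_{(\omega)}$) up to $3$. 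The remaining steps are routine applications of the results recalled above.
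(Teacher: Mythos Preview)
Your proposal is correct and follows essentially the same route as the paper: the lower bound via the existence of a non-free stably free ideal, and the upper bound via a non-zero normal element $\omega$, Lemma~\ref{L3}(ii), and the Kmax version of the Stable Range Theorem, with Kdim($A_{(\omega)}$) $=2$ verified through Proposition~\ref{T2}. The only cosmetic differences are that the paper names the non-root-of-unity root $\lambda$ (and takes $\omega=DU-\mu UD$, corresponding to $y-\mu x$), and that it handles the two exceptional conditions of Proposition~\ref{T2} by first reducing condition (ii) to condition (i) as in Lemma~\ref{L4} and then invoking \cite[Lemma 2.2(i)]{carvalho} for the periodicity case, whereas you rule out both conditions directly via the explicit form $s_n=c_1\lambda^n+c_2\mu^n$ with $c_2\neq 0$.
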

\begin{proof}
Part (i) follows from Remark \ref{R5}. For (ii), suppose that $\lambda$ is not a root of unity and set $\omega=\frac{1}{\lambda^2-\lambda}(\beta(\lambda-1)UD+\lambda(\lambda-1)DU)$. Therefore, $\omega=DU-\mu UD$ and a straightforward calculation shows that $\omega$  is a normal element of $A$. Thus $A_{(\omega)}$ exists and $A_{(\omega)}\cong \K[x,y]_{(\omega')}(\phi,x)$, where $\omega'=y-\mu x$. Given a maximal ideal $Q=\langle x-s_0,y-s_1 \rangle$ of $\K[x,y]$ such that $x\in \phi^{n}(Q)$ for infinitely many values of $n$, we have that $\phi^{n}(Q)=Q$ for some $n\geq 1$: in fact, using the initial conditions $s_0=0$ and $s_1=s$, in this case the equation (\ref{RRS}) can be written as $s_n= \frac{s}{\lambda-\mu}(\lambda^n-\mu^n)$; if $s_n=0$, necessarily $s=0$ since $\frac{\lambda}{\mu}$ is not a root of unity, and the assertion follows.  A similar reasoning to the one in the proof of Proposition \ref{P3}, along with \cite[Lemma 2.2(i)]{carvalho}, allow to obtain Kdim($A_{(\omega)}$) $=2$ and from Lemma \ref{L3}(ii) we get the inequality sr($A$) $\leq 3$.
\end{proof}
\begin{remark}
The stable rank is closely related to the cancellation property for projective modules. Recall that two finitely generated projective $S$-modules $P$ and $P'$ are called \emph{stably isomorphic} if $P\oplus S^n\cong P'\oplus S^n$ for some $n$. It is said that $P$ satisfies the \emph{cancellation property} if any $P'$ stably isomorphic to $P$ is in fact isomorphic to $P$. Hence, if $P$ is a finitely generated projective module over a down-up algebra $A$ with rank($P$) $\geq$ sr($A$), a simple reasoning proves that $P$ has the cancellation property.
\end{remark}
A table summarizing the stable ranks of some important examples of down-up algebras, and one related algebra, is included below.
\begin{table}
\begin{center}
\begin{tabular}{|p{4.5cm}|p{3.5cm}|p{3cm}|c|}
\hline
\textbf{Algebra} $A=A(\alpha,\beta,\gamma)$ & \textbf{Parameters} & \textbf{Conditions on parameters} & \textbf{Bounds of sr}($A$) \\
\hline \hline
$\U(sl_2(\K))$ & $\alpha=2$, $\beta=-1$, $\gamma=-2$ & & $2\leq$ sr($A$) $\leq 3$  \\ \hline
$\U(osp(1,2))$ & $\alpha=0$, $\beta=1$, $\gamma=\frac{1}{2}$ & & $2\leq$ sr($A$) $\leq 3$  \\ \hline
$\U(\h)$, $\h$ the Heisenberg algebra of dimension 3 & $\alpha=2$, $\beta=-1$, $\gamma=0$ & &  sr($A$) $=3$  \\ \hline
\multirow{2}{4.5cm}{Smith's algebras similar to $U(sl_2(\K))$ with deg($f(h)$) $\leq 1$} & $\alpha=2$, $\beta=-1$ &  $\gamma=0$ & sr($A$) $=3$\\ \cline{3-4}
& & $\gamma\neq 0$& $2\leq$ sr($A$) $\leq 3$ \\ \hline
\multirow{2}{4.5cm}{Conformal $sl_2(\K)$ algebras with $c\neq 0$, $b=0$ \linebreak
$xz-azx=x$, $zy-ayz=y$, $yx-cxy=bz^2+z$} & $\alpha=c^{-1}(1+ac)$, $\beta=-ac^{-1}$, $\gamma=-c^{-1}$ &  $c=1$ or $a=1$ & $2\leq$ sr($A$) $\leq 3$\\ \cline{3-3}
 & & $a=c^{-1}\neq 1$ and $a$  a primitive root of unity & \\ \cline{3-4}
& & Otherwise & $2\leq$ sr($A$) $\leq 4$ \\ \hline
\multirow{3}{4.5cm}{Quantum Heisenberg algebra $H_q\cong \U^{+}_q(sl_3(\K))$, $q\in K^{*}$.\linebreak
$zx=qxz$, $zy=q^{-1}yz$, $xy-qyx=z$} & $\alpha=q+q^{-1}$, $\beta=-1$, $\gamma=0$ and $q\neq 1$ &  $q=-1$ & $2\leq$ sr($A$) $\leq 3$\\ \cline{3-3}
& & $q^2$ not a root of unity &  \\ \cline{3-4}
& & Otherwise & $2\leq$ sr($A$) $\leq 4$ \\ \hline
\multirow{2}{4.5cm}{$q$-analog $H_q'$ of $U(\h)$, $q\neq 0,1$.\linebreak $xy-qyx=z$, $xz=qzx$ and $zy=qyz$} & $\alpha=2q$, $\beta=-q^2$, $\gamma=0$ &  $q$ is a primitive root of unity & $2\leq$ sr($A$) $\leq 3$\\ \cline{3-4}
& & Otherwise & $2\leq$ sr($A$) $\leq 4$ \\ \hline
\multirow{2}{4.5cm}{$\mathfrak{m}(\underline{\xi})$ the Witten's Deformation of $\U(sl_2(\K))$: $xz-\xi_1zx=\xi_2x$, $zy-\xi_3yz=\xi_4y$, $yx-\xi_5xy=\xi_6 z^2+\xi_7 z$, with $\xi_6=0$, $\xi_5\xi_7\neq 0$, $\xi_1=\xi_3$ and $\xi_2=\xi_4$} & $\alpha=\frac{1+\xi_1\xi_5}{\xi_5}$, $\beta=-\frac{\xi_1}{\xi_5}$, $\gamma=-\frac{\xi_2\xi_7}{\xi_5}$ & $\xi_1=\xi_5=1$ and $\xi_2=0$  & sr($A$) $=3$  \\ \cline{3-3}
 & & $\xi_2=0$, $\xi_5=1$, $\xi_1$  not a root of unity & \\ \cline{3-3}
 & & $\xi_1=1$, $\xi_2=0$, $\xi_5$ not a root of unity &  \\ \cline{3-4}
 & & $\xi_2\neq 0$, $\xi_1=1$ or $\xi_5=1$ & $2\leq$ sr($A$) $\leq 3$\\ \cline{3-3}
 & & $\xi_1\neq 1$, $\xi_1=\xi_5^{-1}$ and $\xi_1$ is a primitive root of unity & \\ \cline{3-3}
 & & $\xi_2=0$, $\xi_1,\xi_5\neq 1$ and $\xi_1\xi_5$ not a root of unity & \\ \cline{3-4}
 & & Otherwise & $2\leq$ sr($A$) $\leq 4$ \\ \hline
\multirow{3}{4.5cm}{Woronowicz's algebras: $xz-\zeta^4zx=(1+\zeta^2)x$, $zy-\zeta^4yz=(1+\zeta^2)y$, $xy-\zeta^2yx=\zeta z$ with $\zeta\neq \pm1,0$} & $\alpha=\zeta^2(1+\zeta^2)$, $\beta=-\zeta^6$, $\gamma=\zeta(1+\zeta^2)$ &   & $2\leq$ sr($A$) $\leq 4$ \\
 &&&\\
 &&& \\
 &&& \\ \hline
Universal enveloping of Lie super algebra $sl(1,1)$ & $\U(sl(1,1))\cong \frac{A(0,1,0)}{\langle d^2,u^2\rangle}$ & & sr($\U(sl(1,1)$) $=1$  \\ \hline
\end{tabular}
\caption{Bounds of the stable rank of $A(\alpha,\beta,\gamma)$.}
\label{bounds}
\end{center}
\end{table}

\newpage
%%%%%%%%%%%%%%%%%%%%%%%%%%%%%%%%%%%%%%%%%%%%%%%%%

\bigskip
\noindent
C.G.:\\
IMAS, UBA-CONICET, Consejo Nacional de Investigaciones Cientícas y Técnicas,\\
Ciudad Universitaria, Pabellón I, 1428 Buenos Aires, Argentina\\
cgallego@dm.uba.ar\\
\\
A.S.:\\
Departamento de Matemática, Facultad de Ciencias Exactas y Naturales, Universidad de Buenos Aires,\\
Ciudad Universitaria, Pabellón I, 1428 Buenos Aires, Argentina; and\\
IMAS, UBA-CONICET, Consejo Nacional de Investigaciones Científicas y Técnicas, Argentina\\
asolotar@dm.uba.ar


\begin{thebibliography}{200}


\bibitem{Antoniou}Antoniou, I., Iyudu. N. and Wisbauer, R. \textit{On Serre's problem for RIT algebras}, Comm. Algebra. \textbf{31} (12), (2003), 6037-6050.

\bibitem{Bavula3}Bavula, V. and van Oystaeyen, F. \textit{Krull Dimension of Generalized Weyl Algebras and Itered Skew Polynomial Rings: Commutative Coefficients}, J. Algebra \textbf{208}, (1998), 1-34.

\bibitem{Bavula4}Bavula, V. and Lenagan, T. \textit{Generalized Weyl Algebras Are Tensor Krull Minimal}, J. Algebra \textbf{239}, (2001), 93-111.

\bibitem{Benkart1}Benkart, G. and Roby, T.  \textit{Down-Up Algebras}, J. Algebra \textbf{209}, (1998), 305-344.

\bibitem{carvalho}Carvalho, P. and Musson, I., \textit{Down-Up Algebras and their Representation Theory}, J. Algebra \textbf{228}, (2000), 286-310.

\bibitem{carvalho2}Carvalho, P. and Musson, I., \textit{Monolithic modules over noetherian rings}, Glasgow Math. J. \textbf{53}, (2011), 683-692.

\bibitem{solotar}Chouhy, S., Herscovich, E. and Solotar, A., \textit{Hochschild homology and cohomology of
down-up algebras}, arXiv:1609.09809v1 [math.KT].

\bibitem{Iyudu}Iyudu, N. and Wisbauer, R., \textit{Non-trivial stably free modules over crossed products}, J. Phys. A: Math. Theor. \textbf{42}, (2009), 1-11.

\bibitem{Jordan}Jordan, D. A., \textit{Down-Up Algebras and Ambiskew Polynomial Rings}, J. Algebra \textbf{228}, (2000), 311-346.

\bibitem{Kirkman}Kirkman, R., Musson, I. and Passman, D., \textit{Noetherian Down-Up Algebras}, Proc. Am. Math. Soc. \textbf{127}(11), (1999), 3161-3167.

\bibitem{Lam}Lam, T.Y., \textit{Serre's Problem on Projective Modules}, Springer Monographs in Mathematics, Springer, 2006.

\bibitem{Lam6}Lam, T.Y., \textit{A crash course on stable range, cancellation, substitution, and exchange}, J. Algebra Appl. \textbf{3}(03), (2004), 301-343.

\bibitem{McConnell}McConnell, J. and Robson, J., \textit{Noncommutative Noetherian Rings}, Graduate Studies in Mathematics, 30, AMS, 2001.

\bibitem{Stafford2}Stafford, J.T., \textit{Module structure of Weyl algebras}, J. London Math. Soc. \textbf{18}, (1978), 429-442.

\bibitem{Stafford3}Stafford, J.T., \textit{On the stable range of right noetherian rings}, Bull. London Math. Soc. \textbf{13}, (1981), 39-41.

\bibitem{Stafford5}Stafford, J.T., \textit{Stable free, projective right ideals}, Compositio Math. \textbf{54}, (1985), 63-78.

\bibitem{Stafford6}Stafford, J.T., \textit{Absolute stable rank and quadratic forms over noncommutative rings}, $K$-Theory \textbf{4}, (1990), 121-130.

\bibitem{Suslin}Suslin, A. A., \textit{The cancellation problem for projective modules and related topics}, Ring theory (Proc. Conf., Univ. Waterloo, Waterloo, 1978), pp. 323-338, Lecture Notes in Math., Vol. 734, Springer-Verlag, Berlin-New York, 1979.

\bibitem{Tintera}Tintera, G., \textit{Kmax and stable rank of enveloping algebras}, Proc. Amer. Math. Soc. \textbf{119}(3), (1993), 691-696.

\bibitem{zhao}Zhao, K. \textit{Centers of Down-Up Algebras}, J. Algebra  \textbf{214}, (1999), 103-121.

\end{thebibliography}
\end{document}